\theoremstyle{definition}
\newtheorem{defn}{Definition}[section]
\newtheorem{tvr}[defn]{Proposition}
\theoremstyle{remark}
\newlength{\defbaselineskip}
\newcommand{\setlinespacing}[1]%
           {\setlength{\baselineskip}{#1 \defbaselineskip}}
\newcommand{\ep}{\varepsilon}
\newcommand{\bC}{\mathbb{C}}
\newcommand{\bN}{\mathbb{N}}
\newcommand{\bZ}{\mathbb{Z}}
\newcommand{\mL}{\mathcal{L}}
\newcommand{\mG}{\mathcal{G}}
\newcommand{\mA}{\mathcal{A}}
\newcommand{\mS}{\mathcal{S}}
\newcommand{\I}{\mathcal{I}}
\newcommand{\sS}{S}
\newcommand{\sC}{C}
\newcommand{\p}{\cdot}
\newcommand{\Span}{\operatorname{span}}
\newcommand{\gl}{\operatorname{gl}}
\newcommand{\spl}{\operatorname{sl}}
\newcommand{\slc}{\operatorname{sl}(3,\bC)}
\newcommand{\sla}{\operatorname{sl}(2,\bC)}
\newcommand{\der}{\operatorname{der}}
\newcommand{\ad}{\operatorname{ad}}
\newcommand{\Tr}{\operatorname{Tr}}
\newcommand{\Id}{\operatorname{Id}}
\newcommand{\wt}{\widetilde}
\newcommand{\ev}{\varepsilon}
\newcommand{\Ga}{\Gamma}
\newcommand{\xor}{\operatorname{xor}}
\newcommand{\dima}{\dim_{(\alpha,\beta,\gamma)}}
\newcommand{\set}[2]{\left\{ #1 \, |\, #2 \right\}}
\newcommand{\bul}{-}
\newcommand{\ci}{+}
\def\now
\def\todays
\def\zeroPadTwo#1%
\begin{document}

\title[Graded contractions of the Gell-Mann graded $\slc$]
{Graded contractions of the Gell-Mann graded $\slc$}

\author[J. Hrivn\'{a}k]{Ji\v{r}\'{i} Hrivn\'{a}k$^{1}$}
\author[P. Novotn\'{y}]{Petr Novotn\'{y}$^1$}

\date{ \today}

\begin{abstract}
\small
The Gell-Mann grading, one of the four gradings of $\slc$ that cannot be further refined, is considered as the initial grading for the graded contraction procedure. Using the symmetries of the Gell-Mann grading, the system of contraction equations is reduced and solved. Each non--trivial solution of this system determines a Lie algebra which is not isomorphic to the original algebra $\slc$.  
The resulting $53$ contracted algebras are divided into two classes --- the first is represented by the algebras which are also continuous In\"on\"u--Wigner contractions, the second is formed by the discrete graded contractions.   
\end{abstract}

\maketitle
{\small
\noindent
\textit{Keywords:} Lie algebra $\slc$, Gell-Mann grading, Graded contraction, Lie algebra identification

\medskip

\noindent
\textit{MSC:} 17B30, 17B70, 17B05

\medskip

\noindent
$^1$ Department of physics,
Faculty of nuclear sciences and physical engineering, Czech
Technical University in Prague, B\v{r}ehov\'a~7, 115 19 Prague 1, Czech
Republic

\medskip
\noindent
\textit{E-mail:} jiri.hrivnak@fjfi.cvut.cz, petr.novotny@fjfi.cvut.cz
}
\section{Introduction}

This paper extends the work undertaken in \cite{PC4,HN1} where the graded contractions of the Cartan and Pauli graded $\slc$ are classified. In this article, the Gell-Mann matrices and their corresponding grading \cite{HPP3,PZ2} of $\slc$ are considered as a starting point for the graded contraction procedure. 
The Gell-Mann matrices represent a hermitian generalization of the two--dimensional Pauli matrices \cite{Georgi}.
The results of this procedure are  Gell-Mann graded complex Lie algebras, with characteristics specifically linked to the original algebra $\slc$.

The graded contraction procedure is introduced in \cite{PC1} as an alternative to continuous
In\"on\"u--Wigner (IW) contractions. In contrast to continuous contractions, graded contractions
represent an algebraical perspective --- instead of a limit of
sequences of mutually isomorphic Lie algebras,
the commutation relations among graded subspaces are multiplied by
contraction parameters. These parameters then have to satisfy the system of equations resulting from the Jacobi identities. The complexity of this contraction system depends on the number of
grading subspaces and the structure of a given grading. 

The graded contraction procedure of Lie algebras was applied to a number of cases and was also generalized to other graded algebras --- graded
contractions of inhomogeneous algebras \cite{dAzca}, central
extensions \cite{Guise}, affine algebras \cite{Hus,dM2}, Jordan
algebras \cite{KP}, Virasoro algebras \cite{Kostyak}, Lie algebra
$\operatorname{e}(2,1)$ \cite{PC5} were studied. Physically motivated cases
related to the kinematical and conformal group of space--time were
presented in \cite{dM1,T2}. 
Partial general results, for so called generic cases of gradings of Lie algebras, were constructed in \cite{WW4,WW5}. This approach solved the contraction systems simultaneously for all Lie algebras which allow a given grading.
However, it appears that in the generality of this approach, a significant part of the solutions would be omitted for non-generic cases. As the Gell-Mann grading of $\slc$ cannot be considered a generic case, the detailed analysis of possible outcome of graded contraction procedure is thus at present necessary.

The results of the graded contraction procedure fall into two distinct categories --- continuous graded contractions, which correspond to IW contractions, and discrete
graded contractions. In contrast to the IW contraction, a discrete graded contraction may yield a 
continuous parametric family of non--isomorphic Lie algebras --- such continuous parametric
families can be used in the deformation theory of Lie algebras. Additional advantages of continuous graded contractions are consequences of them also being IW contractions. Besides the  physical application of IW contractions --- studying the possibility of existence of a correspondence principle --- they can be used for obtaining invariants and representations of the contracted algebras \cite{IW}.

The concept of the graded contractions can be also used for
contractions of representations \cite{PC2,PN2}. The process of obtaining representations of the contracted algebras is not completely straightforward, the grading and the representation have to satisfy the so called compatibility condition. Even though the compatibility condition is always satisfied for root space decompositions, i.e. Cartan gradings, in other cases it may be a more challenging task \cite{Leng,PT,T1}. The general representation theory which exists
for simple Lie algebras, can be then used for the construction of
representations of other types of Lie algebras, especially solvable ones. The method is mostly valuable for discrete graded contractions  --- then the representations cannot be obtained by the standard way using IW contractions.

Thus, the goals of this work are: 
\begin{enumerate}[(i)]
 \item to obtain all IW contractions which start from the Gell-Mann grading of $\slc$ and preserve the Cartan subalgebra; classifying these contractions is motivated by their direct applicability for obtaining Casimir operators and representations, 
\item to obtain discrete graded contractions -- these may yield new representations of the contracted algebras in cases where the direct IW method is not possible,
\item to contribute to the classification problem of Lie algebras 
--- solvable Lie algebras are classified only for dimensions not
greater than six.
\end{enumerate}

The paper is organized as follows. In Section 2, the Gell-Mann grading and its symmetries are recalled. In Section 3, the graded contraction procedure is formulated and a suitable equivalence of the solutions introduced. The contraction system, its reduction by the symmetries and the solution, together with the set of higher-order identities is located in Section 4. The procedure, which is necessary for identification of the results, is applied to the contracted Lie algebras in Section 5. Concluding remarks and follow-up questions are contained in Section 6. In Appendix A.1, the contraction matrices are located, Appendix A.2 contains the final classification of graded contractions of the Gell-Mann graded $\slc$ in a tabulated form. Appendix A.3 contains the invariant functions of the one--parametric graded contractions. 

\section{ The Gell--Mann grading}

\subsection{The grading $\Ga$}\

Throughout this article, we use the following basis of the Lie algebra of three-dimensional complex traceless matrices $\spl(3,\bC)$:
\begin{equation*}
\begin{array}{llll}
e_1=\begin{pmatrix}
        1&0&0\\
        0&-1&0\\
        0&0&0
        \end{pmatrix}, &
e_2=\begin{pmatrix}
        0&0&0\\
        0&1&0\\
        0&0&-1
        \end{pmatrix}, &
e_3=\begin{pmatrix}
        0&1&0\\
        1&0&0\\
        0&0&0
        \end{pmatrix}, &
e_4=\begin{pmatrix}
        0&0&1\\
        0&0&0\\
        1&0&0\\
        \end{pmatrix},
\\[24pt]
e_5=\begin{pmatrix}
        0&0&0\\
        0&0&1\\
        0&1&0
        \end{pmatrix}, &
e_6=\begin{pmatrix}
        0&-1&0\\
        1&0&0\\
        0&0&0
        \end{pmatrix}, &
e_7=\begin{pmatrix}
        0&0&0\\
        0&0&-1\\
        0&1&0
        \end{pmatrix}, &
e_8=\begin{pmatrix}
        0&0&-1\\
        0&0&0\\
        1&0&0
        \end{pmatrix}.
\end{array}
\end{equation*}
The corresponding non--zero commutation relations $[e_i,e_j]=e_ie_j-e_je_i$, $i,j\in \{1,2,\dots,8\}$ of $\spl(3,\bC)$ written in this
basis are
\begin{equation}\label{comrel}
\arraycolsep=2pt
\begin{array}{llllll}
[e_1,e_3]=-2e_6, & [e_1,e_4]=-e_8,& [e_1,e_5]=e_7,& [e_1,e_6]=-2e_3,& [e_1,e_7]=e_5,& [e_1,e_8]=-e_4, \\[2pt] [e_2,e_3]=e_6, & [e_2,e_4]=-e_8,& [e_2,e_5]=-2e_7,& [e_2,e_6]=e_3,& [e_2,e_7]=-2e_5,& [e_1,e_8]=-e_4,\\[2pt]
[e_3,e_4]=-e_7,& [e_3,e_5]=-e_8,& [e_3,e_6]=2e_1,& [e_3,e_7]=-e_4,& [e_3,e_8]=-e_5,\\[2pt]
[e_4,e_5]=-e_6,& [e_4,e_6]=-e_5,& [e_4,e_7]=e_3,& \multicolumn{2}{l}{[e_4,e_8]=2(e_1+e_2),}\\[2pt]
[e_5,e_6]=e_4,& [e_5,e_7]=2e_2,& [e_5,e_8]=e_3,& [e_6,e_7]=-e_8,& [e_6,e_8]=e_7,& [e_7,e_8]=-e_6.
\end{array}
\end{equation}
The two-dimensional subspace $\Span_\bC\{e_1,e_2\} $ forms a Cartan subalgebra of $\spl(3,\bC)$. In the following, it is convenient to 
take into account ordered triplets $(i_1,i_2,i_3)$ with $i_1,i_2,i_3\in\{0,1\}$ which form the additive abelian group $\bZ_2^3$ with addition mod $2$ and introduce a set of seven indices $I$ such that 
$$I=\bZ_2^3\setminus \{(0,0,0)\}.$$
Let us denote the Cartan subalgebra by 
$$ L_{001}= \Span_\bC\{e_1,e_2\} $$
and the one-dimensional subspaces corresponding to the remaining basis vectors by
\begin{equation*}
\begin{alignedat}{3}
L_{111}&= \bC e_3,\quad & L_{101}&= \bC e_4,\quad & L_{011}&=\bC e_5, \\
L_{110}&= \bC e_6,\quad & L_{010}&= \bC e_7,\quad & L_{100}&=\bC e_8. 
\end{alignedat}
\end{equation*}
If we assign $L_{000}=\{0\}$, we obtain a decomposition $\Ga$ of the vector space $\spl(3,\bC)$ into the direct sum of the subspaces
\begin{align}\nonumber\label{ord_gell}
\Ga:\quad \spl(3,\bC) &= L_{001}\oplus L_{111}\oplus L_{101} \oplus
L_{011}\oplus L_{110} \oplus L_{010}\oplus L_{100}.
\end{align}
with the grading property 
$$[L_j,L_k]\subseteq L_{j+k}, \quad j,k\in I.$$

The grading $\Ga$ is commonly known as the orthogonal grading of $\spl(3,\bC)$ and is described in \cite{Kostrikin} as a special case of orthogonal gradings of $\spl(n,\bC)$. Since the grading subspaces of $\Ga$ are minimal and its index set is embedded in $\bZ_2^3$, the grading $\Ga$ forms so called fine group grading.
Bases of the grading subspaces of $\Ga$ are  formed by the Gell-Mann matrices \cite{GellMann} and therefore, we also refer to $\Ga$ as to the Gell-Mann grading.

\subsection{The symmetry group of $\Ga$}\

The symmetry group of the grading $\Ga$ consists of such automorphisms of $\spl (3, \bC)$ which permute the grading subspaces. For the case of the Gell-Mann grading, the induced permutation group can be realized as a certain finite matrix group which acts on the index set $I$. Such a matrix group, realizing permutations of $\Ga$ which correspond to automorphisms, is described in \cite{HPPT1,HPPT3}. It can be realized as the stability
subgroup $G$ of the point $(0,0,1)$ in the finite matrix group
$SL(3,\bZ_2)$, i.e.
\begin{equation*}
G= \left\{\left(
\begin{array}{ccc}
               a&b&e\\
               c&d&f\\
               0&0&1\\
              \end{array}
\right)\ \bigg|\  a,b,c,d,e,f\in\mathbb{Z}_2,\  ad-bc= 1  \pmod 2
\right\}.
\end{equation*}
The matrix group $G$ has 24 elements and its action on the index set $I$ is given as the right matrix multiplication,
i.e. for $A=\left(\begin{smallmatrix} a & b & e \\ c & d & f
\\ 0 & 0 & 1 \\
\end{smallmatrix}\right) \in G$ and $i=(i_1,i_2,i_3)\in I$ it
holds
\begin{equation*}
i  \mapsto iA = ((i_1a+i_2c)_{\hspace{-6pt}\mod2}, (i_1b+i_2d)_{\hspace{-6pt}\mod2},
(i_1e+i_2f+i_3)_{\hspace{-6pt}\mod2}).
\end{equation*}
In the following, the symmetry matrix group $G$ is crucial for construction and solving of the contraction system.

\section{ The Gell--Mann graded contractions}

\subsection{The graded contractions of $\Ga$}\

The graded contraction procedure for the grading $\Ga$ constructs new Lie algebras by introducing complex parameters $\ev_{ij}\in \bC$
and defining contracted commutation relations of grading subspaces $L_i$ via 
$$[x_i,x_j]_\ev = \ev_{ij}[x_i,x_j],$$
for all $x_i\in L_i$, $x_j\in L_j$ and $i,j\in I$. If for a pair of the subspaces $[L_i,L_j]=0$ holds then the corresponding contraction parameter $\ev_{ij}$ is irrelevant and we put $\ev_{ij}=0$.
Relevant are only such contraction parameters $\ev_{ij}$ for which 
$[L_i,L_j]\neq 0$ --- these have to fulfill the following conditions in order to  $[.\,,.]_\ev$ become a Lie bracket.

Firstly, the relevant contraction parameters have to satisfy the symmetry condition which corresponds to the required antisymmetry of $[.\,,.]_\ev$: 
$$\ev_{ij}=\ev_{ji}.$$
Secondly, the equation $e_{(i\,j\,k)}$ corresponding to Jacobi identity has to be satisfied for all $i,j,k\in I$:
\begin{equation}\label{rovnice}
\begin{array}{ll}
e_{(i\,j\,k)}: & \ev_{jk}\ev_{i,j+ k}[x_i,[x_j,x_k]] +
\ev_{ki}\ev_{j,k+i}[x_j,[x_k,x_i]] + \ev_{ij}\ev_{k,i+
j}[x_k,[x_i,x_j]] = 0 \\[4pt]
& \forall x_i\in L_i, \forall x_j\in L_j, \forall x_k\in L_k. \\
\end{array}
\end{equation}
A contraction equation $e_{(i\,j\,k)}$ as well as a contraction parameter $\ev_{ij}$
do not depend on the order of indices $i,j,k$, therefore we label contraction equations by unordered triplets of
contraction indices --- multisets of cardinality 3 --- and contraction parameters by  unordered pairs of grading indices. The set of all unordered triplets is denoted by $I^3_u$ and the set of all unordered pairs of grading indices is denoted by $I^2_u$. We collect all contraction equations in the set $\sS_\Ga$, i.e.
\begin{equation}\label{general}
\sS_\Ga=\set{e_w}{w\in I^3_u}.
\end{equation}

In general, the equations \eqref{rovnice} have three terms. In some cases of gradings, the equation \eqref{rovnice} reduces to two two-term equations. There are two approaches directed to reducing the equations \eqref{rovnice} to two terms while handling all Lie algebras with the same grading properties simultaneously.
\begin{enumerate}[(i)]
\item In the so called generic case, all contraction parameters are considered to be relevant~\cite{WW4}. Moreover, parameters corresponding to the space $L_{000}=\{0\}$ are defined. This approach leads to the system of 224 two--term equations
\begin{equation}\label{extend}
\ev_{jk}\ev_{i,j+ k} = \ev_{ki}\ev_{j,k+i} = \ev_{ij}\ev_{k,i+
j}, \qquad  i,j,k\in \bZ_2^3.
\end{equation}
\item The second less restrictive approach from~\cite{PC1} takes the system of contraction equations in the form 
\begin{equation}\label{ropa}
\ev_{jk}\ev_{i,j+ k} = \ev_{ki}\ev_{j,k+i} = \ev_{ij}\ev_{k,i+j} \qquad  i,j,k\in I,
\end{equation}
where all equations containing irrelevant parameters are omitted. This leads to the system consisting of 84 equations.
\end{enumerate}
 
The existence of general conditions for equivalence of the system~\eqref{rovnice} and the reduced systems~\eqref{extend},~\eqref{ropa} is still an open problem. Let us note that the systems ~\eqref{general} and~\eqref{ropa} are equivalent in the case of the Pauli grading~\cite{HN1} and the Cartan grading~\cite{PC4} of $\spl (3, \bC)$. On the contrary, we will see later that --- in our case of the Gell--Mann grading --- no two of these three systems are equivalent.

\subsection{Equivalence of solutions of $\sS_\Ga$}\

Contraction parameters are usually written in the form of a symmetric
square matrix $\ev = (\ev_{i,j})$ called a contraction matrix. The set of all contraction matrices, which solve the system $\sS_\Ga$, is denoted by $\sC_\Ga$. Each solution from $\sC_\Ga$ then determines a Lie algebra called graded contraction of the Gell-Mann graded $\spl (3, \bC)$. 
An important example of a trivial graded contraction is given by a normalization matrix $\alpha=(\alpha_{ij})$,
where
\begin{equation}\label{normat}
\alpha_{ij}=\frac{a_i a_j}{a_{i+j}}, \qquad a_k \in \bC \setminus \{0\},\, k \in I.
\end{equation}

We define an equivalence on the set of all contraction matrices in the following way. Two contraction matrices $\ev,\wt\ev \in \sC_\Ga$ are equivalent ($\ev \sim \wt\ev$) if there exist a normalization matrix $\alpha$ and $A\in G$ such that
\begin{equation}\label{sequiv}
\ev_{ij} = \frac{a_i a_j}{a_{i+j}}\wt\ev_{iA,jA}
\end{equation}
If $A = \Id$ then $\ev,\wt\ev$ are strongly equivalent ($\ev \approx \wt\ev$). It is shown in \cite{HN1} that two graded contractions given by two equivalent contraction matrices are isomorphic as Lie algebras.
Moreover, taking any $A \in G$ and considering an action
\begin{equation}\label{subst}
\ev_{ij} \mapsto \ev_{iA,jA}
\end{equation}
it can be easily verified (see \cite{HN1}) that the set of solutions $\sC_\Ga$ is invariant with respect to this action. For this reason we determine the orbits of this action on relevant contraction parameters. 

Taking the set of unordered pairs of indices $I^2_u$, this set splits
under the action $(i\, j) \mapsto (iA,\, jA)$ of $A\in G$ into five orbits. Two of these orbits correspond to
irrelevant contraction parameters: 6--point orbit represented by
$((0,1,0)(0,1,0))$ and 1--point orbit represented by
$((0,0,1)(0,0,1))$. The remaining three orbits are 
\begin{itemize}\itemsep 0pt
\item 12--point orbit represented by $((0,1,0)(1,0,0))$,
\item 6--point orbit represented by $((0,1,0)(0,0,1))$,
corresponding relevant contraction parameters will be marked by
superscript $\ci$,
\item 3--point orbit represented by $((0,1,0)(0,1,1))$,
corresponding relevant contraction parameters will be marked by
superscript $\bul$.
\end{itemize}
The set of pairs of indices formed by these three orbits corresponds to relevant contraction parameters and we denote it by $\I \subset I^2_u$. 
The general explicit form of the contraction matrix $\ev$, with irrelevant parameters on the diagonal set to zero
, is
\begin{equation*}
\renewcommand{\arraystretch}{1.2}
\ev=
 \begin{pmatrix}
   0 & \ev^\ci_{(001)(111)} & \ev^\ci_{(001)(101)} & \ev^\ci_{(001)(011)} &
   \ev^\ci_{(001)(110)} & \ev^\ci_{(001)(010)} & \ev^\ci_{(001)(100)} \\
\ev^\ci_{(001)(111)} & 0 & \ev_{(111)(101)} & \ev_{(111)(011)} &
\ev^\bul_{(111)(110)} & \ev_{(111)(010)} & \ev_{(111)(100)} \\
   \ev^\ci_{(001)(101)} & \ev_{(111)(101)} & 0 & \ev_{(101)(011)} &
   \ev_{(101)(110)} & \ev_{(101)(010)} & \ev^\bul_{(101)(100)} \\
\ev^\ci_{(001)(011)} & \ev_{(111)(011)} &  \ev_{(101)(011)} &0&
\ev_{(011)(110)} & \ev^\bul_{(011)(010)} & \ev_{(011)(100)} \\
   \ev^\ci_{(001)(110)} & \ev^\bul_{(111)(110)} & \ev_{(101)(110)} & \ev_{(011)(110)} &
   0 & \ev_{(110)(010)} & \ev_{(110)(100)} \\
\ev^\ci_{(001)(010)} & \ev_{(111)(010)} & \ev_{(101)(010)} &
\ev^\bul_{(011)(010)} & \ev_{(110)(010)} & 0 & \ev_{(010)(100)} \\
   \ev^\ci_{(001)(100)} & \ev_{(111)(100)} & \ev^\bul_{(101)(100)} & \ev_{(011)(100)} &
   \ev_{(110)(100)} & \ev_{(010)(100)} & 0 \\
 \end{pmatrix}.
\end{equation*}

\section{System of contraction equations}

\subsection{Symmetries and reduction of the system $\sS_\Ga$}\

It is shown in \cite{HN1} that similarly to the invariance of the set of contraction matrices $\sC_\Ga$ under the action of the symmetry group $G$, the set of contraction equations $\sS_\Ga$ is invariant under the action of $A\in G$
\begin{equation*}
e_{(i\, j\, k)}\mapsto e_{(iA,\, jA,\, kA)}.
\end{equation*}

The set $I^3_u$ of unordered triplets of grading indices, which label the equations from $\sS_\Ga$, is
decomposed into $11$ orbits with respect to the corresponding action on indices $(i\, j\, k)\mapsto(iA,\, jA,\, kA)$. The representatives of these orbits together with the corresponding number of their elements are summarized in Table~\ref{orb_gel3}. 
\begin{table}
\begin{center}
\renewcommand{\arraystretch}{1.5}
\begin{tabular}{|l|c||l|c||l|c|}
\hline
\multicolumn{4}{|c||}{Trivial equations} & \multicolumn{2}{|c|}{Non-trivial equations}  \\
\hline \hline
(0,1,0)(0,1,0)(1,0,0) & 24 & (0,1,0)(0,0,1)(0,0,1) & 6 & (0,1,0)(1,0,0)(0,1,1) & 12\\
(0,1,0)(0,1,0)(0,1,0) & 6  & (0,1,0)(1,0,0)(1,1,0) & 4 & (0,1,0)(1,0,0)(0,0,1) & 12\\
(0,1,0)(0,1,0)(0,1,1) & 6  & (0,1,0)(0,0,1)(0,1,1) & 3 & (0,1,0)(1,0,0)(1,1,1) & 4\\
(0,1,0)(0,1,0)(0,0,1) & 6  & (0,0,1)(0,0,1)(0,0,1) & 1 & & \\
\hline
\end{tabular}
\end{center}
\medskip
\caption{The representative elements of orbits of $I^3_u$ and the numbers of their points under
the action of the symmetry group $G$.}\label{orb_gel3}
\end{table}
Since all grading subspaces of the Gell-Mann grading form commutative subalgebras of $\spl(3,\bC)$, only the three orbits
in the last column in Table~\ref{orb_gel3} lead to non-trivial contraction equations. Using the symmetry group $G$, we analyze and reduce the contraction equations in each of these three orbits. The reduction of the system $\sS_\Ga$ is performed by equivalent rewriting and by linear operations on the equations, yielding an equivalent system with the same set of solutions. For convenient formulation of the resulting system, it is advantageous to extend the action \eqref{subst}, by acting  on all variables simultaneously, to any equation containing contraction parameters. Thus, we obtain the following result. 

\begin{tvr}
The system of contraction equations $\sS_\Ga$ is equivalent to the system generated by action \eqref{subst} on the equations
\begin{align}
\ev^\bul_{(101)(100)}\ev_{(111)(010)} &=
\ev^\bul_{(111)(110)}\ev_{(010)(100)},\label{eq1}\\
\ev^\ci_{(001)(010)}\ev_{(011)(100)} &=
\ev^\ci_{(001)(110)}\ev_{(010)(100)},\label{eq2}\\
2\ev^\ci_{(001)(100)}\ev^\bul_{(011)(010)} &=
\ev_{(111)(010)}\ev_{(011)(100)}
+\ev_{(011)(110)}\ev_{(010)(100)}.\label{eq3}
\end{align}
The equations \eqref{eq1} and \eqref{eq2} generate $32$ two--term equations and \eqref{eq3} generates $12$ three--term equations.
\end{tvr}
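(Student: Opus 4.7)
The strategy is to process each of the three orbits of non-trivial triplets from Table~\ref{orb_gel3} independently, reduce the Jacobi identity attached to a chosen representative to the form stated, and then extend the result to the entire orbit via the $G$-invariance of $\sS_\Ga$ and the action \eqref{subst}. The other eight orbits of $I^3_u$ require no work, since they were already identified above as producing trivial equations.

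I would first handle the orbit represented by $((0,1,0),(1,0,0),(0,1,1))$, whose index sum is $(1,0,1)$. Because $L_{101}=\bC e_4$ is one-dimensional, the Jacobi identity \eqref{rovnice} evaluated at $x_i=e_7$, $x_j=e_8$, $x_k=e_5$ reduces to a single scalar equation; expanding the iterated brackets through \eqref{comrel} should yield \eqref{eq3} directly, and the $12$ elements of this orbit under the $G$-action account for the claimed $12$ three-term equations.

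The two remaining orbits, with representatives $((0,1,0),(1,0,0),(1,1,1))$ and $((0,1,0),(1,0,0),(0,0,1))$, both involve the two-dimensional Cartan $L_{001}$: in the first case because the index sum equals $(0,0,1)$, so the three iterated brackets in \eqref{rovnice} land in $L_{001}$; in the second case because one of the three input indices is $(0,0,1)$, so the argument $x_k$ ranges over the two-dimensional Cartan. Each orbit element therefore produces a pair of scalar equations, obtained by projecting onto $e_1$ and $e_2$ (respectively by setting $x_k=e_1,e_2$). The key step is to expand both using \eqref{comrel} and to verify that the resulting $2\times 2$ linear system in the three Jacobi products has rank two, with a structure that forces, after elimination, two independent two-term identities. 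For suitable representatives these become \eqref{eq1} in the first case and \eqref{eq2} in the second.

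The multiplicity claim then follows from an orbit count. Since \eqref{eq1} involves only $\ev^\bul$ parameters and \eqref{eq2} only $\ev^\ci$ parameters, and the action \eqref{subst} preserves the orbit type of each parameter, the $G$-orbits of \eqref{eq1} and \eqref{eq2} are necessarily disjoint; a direct stabilizer computation yields orbit sizes $8$ and $24$, whose sum is the advertised $32$. I expect the main obstacle to lie in the rank-two argument for the $2\times 2$ Cartan systems: because the Gell--Mann grading is not generic, the reduction depends on specific numerical cancellations built into \eqref{comrel} and has to be confirmed by direct inspection rather than through a universal principle.
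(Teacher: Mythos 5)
Your strategy is the same as the paper's: evaluate the Jacobi identity at a representative of each of the three non-trivial orbits, use the dimension of the target subspace ($L_{101}=\bC e_4$ one-dimensional for the third orbit, $L_{001}$ two-dimensional for the other two) to extract one or two scalar equations, reduce to the stated form, and propagate by the $G$-action. Your identification of which orbits need the two-component treatment, and your remark that the reduction to $a=b=c$ hinges on specific numerical coincidences in \eqref{comrel} rather than a generic principle, both match the paper (the coefficient matrices are $\left(\begin{smallmatrix}0&2&-2\\-2&2&0\end{smallmatrix}\right)$ and $\left(\begin{smallmatrix}-1&-1&2\\-1&2&-1\end{smallmatrix}\right)$, each of rank two with kernel spanned by $(1,1,1)$, which is exactly what forces the two-term equalities).

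There is, however, a concrete error in your multiplicity count for \eqref{eq1}. A direct stabilizer computation does \emph{not} give orbit size $8$: the stabilizer $H$ of the triple $((0,1,0)(1,0,0)(1,1,1))$ has order $6$ and induces all six permutations of the three products $a,b,c$, so the stabilizer of the single equation $b=c$ has order $2$ and its $G$-orbit contains $24/2=12$ equations. Carried out as you describe, your count would give $12+24=36$, not $32$. The figure $8$ arises only after observing that the three pairwise equalities $a=b$, $b=c$, $a=c$ produced by each of the $4$ triples in the orbit are linearly dependent (only two are independent), so the orbit of $12$ equations spans only $4\times 2=8$ independent ones; this is precisely the coset argument with respect to $H$ that the paper supplies and that your sketch omits. (A smaller inaccuracy: \eqref{eq1} and \eqref{eq2} do not involve \emph{only} $\ev^\bul$ resp.\ $\ev^\ci$ parameters --- both also contain parameters from the unmarked $12$-point orbit --- but your disjointness conclusion survives, since one equation contains a $\ev^\bul$ and no $\ev^\ci$ and the other the reverse, and the action preserves orbit type.)
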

\begin{proof}
We write the contraction equation $e_{(0,1,0)(1,0,0)(1,1,1)}$. Acting by $G$ on this contraction
equation we generate 4 equations. Using the commutation relations \eqref{comrel} we obtain
\begin{align*}
\ev^\bul_{(011)(010)}\ev_{(111)(100)}[e_7,[e_8,e_3]] & +
\ev^\bul_{(101)(100)}\ev_{(111)(010)}[e_8,[e_3,e_7]] + \\
& + \ev^\bul_{(111)(110)}\ev_{(010)(100)}[e_3,[e_7,e_8]] = 0,
\end{align*}
$$
\ev^\bul_{(011)(010)}\ev_{(111)(100)}(-2e_2) +
\ev^\bul_{(101)(100)}\ev_{(111)(010)}(2e_1+2e_2) +
\ev^\bul_{(111)(110)}\ev_{(010)(100)}(-2e_1) = 0.
$$
Since $e_1$ and $e_2$ are linearly independent vectors, we obtain
the following two--term equations
\begin{equation*}
\underbrace{\ev^\bul_{(011)(010)}\ev_{(111)(100)}}_a =
\underbrace{\ev^\bul_{(101)(100)}\ev_{(111)(010)}}_b =
\underbrace{\ev^\bul_{(111)(110)}\ev_{(010)(100)}}_c.
\end{equation*}
This comprises two independent equalities $a=b$, $b=c$ and one
dependent $a=c$. Denoting the set of unordered pairs from $\I$ by  $\I^2_u$ and considering the action of $G$ on $\I^2_u$, one
can see that the indices of the terms $a,b,c$ lie in the same
12--point orbit. Moreover, the matrix $X=\left(
\begin{smallmatrix}
0 & 1 & 0\\
1 & 0 & 0\\
0 & 0 & 1\\
\end{smallmatrix} \right)$ transforms the equation $b=c$ into
the equation $a=c$ and the matrix $Y=\left(
\begin{smallmatrix}
0 & 1 & 0\\
1 & 1 & 1\\
0 & 0 & 1\\
\end{smallmatrix} \right)$ transforms $b=c$ into
$a=b$. Thus, the whole orbit of \eqref{eq1} is generated from the equation
$b=c$ by action of $G$. Since the stability subgroup
$H=\{1,X,Y,Y^2,XY,XY^2\}$ of the point $((0,1,0)(1,0,0)(1,1,1))$
provides all six permutations of the terms $a,b,c$, each of four
left cosets of $G$ --- with respect to the group $H$
--- generates six linearly dependent equations. Among these six
equations only two are linearly independent, therefore, we have $8$
linearly independent equations.

The representative point $((0,1,0)(1,0,0)(0,0,1))$ contains the
index of the 2--dimensional grading subspace $L_{001}$. Therefore,
it leads to two equations
$$
\ev_{(101)(010)}\ev^\ci_{(001)(100)}[e_7,[e_8,e_i]]  +
\ev_{(011)(100)}\ev^\ci_{(001)(010)}[e_8,[e_i,e_7]] +
\ev^\ci_{(001)(110)}\ev_{(010)(100)}[e_i,[e_7,e_8]] = 0, \\
$$
where $i=1,2$. Using the commutation relations \eqref{comrel} we have
\begin{eqnarray*}
(-\ev_{(101)(010)}\ev^\ci_{(001)(100)}
-\ev_{(011)(100)}\ev^\ci_{(001)(010)} +
2\ev^\ci_{(001)(110)}\ev_{(010)(100)})e_3 = 0, \\
(-\ev_{(101)(010)}\ev^\ci_{(001)(100)}+
2\ev_{(011)(100)}\ev^\ci_{(001)(010)}
-\ev^\ci_{(001)(110)}\ev_{(010)(100)})e_3 = 0.
\end{eqnarray*}
By summing and subtracting these equations we obtain new two--term
equations
\begin{equation*}
\underbrace{\ev^\ci_{(001)(100)}\ev_{(101)(010)}}_a =
\underbrace{\ev^\ci_{(001)(010)}\ev_{(011)(100)}}_b =
\underbrace{\ev^\ci_{(001)(110)}\ev_{(010)(100)}}_c.
\end{equation*}
Considering the action of $G$, the matrix $X=\left(
\begin{smallmatrix}
0 & 1 & 0\\
1 & 0 & 0\\
0 & 0 & 1\\
\end{smallmatrix} \right)$ transforms the term $a$ into the term
$b$ while the term $c$ is unchanged. In fact, the index of the term
$c$, i.e. [(001)(110)][(010)(100)], lies in 12--point orbit in
$\I^2_u$ while the indices of $a,b$ belong to the one 24--point
orbit. The stability subgroup of $((0,1,0)(1,0,0)(0,0,1))$ is a
cyclic group $\{1,X\}$ and, therefore, the whole orbit of \eqref{eq1}
consists of 24 linearly independent equations generated from $b=c$
by action of $G$.

The last representative point $((0,1,0)(1,0,0)(0,1,1))$ leads to
three--term equation
\begin{align}\nonumber
\ev_{(111)(010)}\ev_{(011)(100)}[e_7,[e_8,e_5]] +
\ev^\ci_{(001)(100)}\ev^\bul_{(011)(010)}[e_8,[e_5,e_7]] &+
\\ \nonumber
+\ev_{(011)(110)}\ev_{(010)(100)}[e_5,[e_7,e_8]] &= 0, \\
\label{gel_eq3}
(2\underbrace{\ev^\ci_{(001)(100)}\ev^\bul_{(011)(010)}}_a-\underbrace{\ev_{(111)(010)}\ev_{(011)(100)}}_b
-\underbrace{\ev_{(011)(110)}\ev_{(010)(100)}}_c)e_4 &= 0.
\end{align}
The indices of $b,c$ belong to the same 24--point orbit, while the
index of $a$ belongs to 12--point orbit in $\I^2_u$. The matrix
$Z=\left(
\begin{smallmatrix}
1 & 0 & 0\\
0 & 1 & 1\\
0 & 0 & 1\\
\end{smallmatrix} \right)\in G$ transforms $b$ into $c$
while $a$ is preserved. Since the stability subgroup of
$((0,1,0)(1,0,0)(0,1,1))$ is $\{1,Z\}$, we get 12 linearly
independent three--term equations by the action of $G$ on
\eqref{gel_eq3}.
\end{proof}


\subsection{Finding the solution of $\sS_\Ga$}\

An efficient algorithm, developed specially for solving contraction systems, was formulated in \cite{HN1}. It relies on the notion of the equivalence of solutions \eqref{sequiv} -- the idea is not to evaluate all solutions but eliminate equivalent solutions during the solving process. Using this algorithm, we obtain the following explicit list of solutions.    
\begin{tvr}\label{solving}
For any solution of the contraction system $\ep \in\sC_\Ga$ there exist
$a,\,b,\,c,\,d,\,e,\,f\in\bC$ such that $\ep$ is equivalent to some of the following solutions
\begin{alignat*}{4}
 \ev^0_1 &=\left(\begin{smallmatrix}
0 &  1 &  1 &  a &  a &  1 &  a\\
1 &  0 &  b &  c &  cb &  c &  b\\
1 &  b &  0 &  1 &  b &  1 &  b\\
a &  c &  1 &  0 &  ca &  c &  a\\
a &  cb &  b &  ca &  0 &  c & ba\\
1 &  c &  1 &  c &  c &  0 &  1\\
a &  b &  b &  a &  ba &  1 &  0\\
\end{smallmatrix}\right),\,& 
\ev^1_1 &=\left(\begin{smallmatrix}
0 &  0 &  0 &  0 &  0 &  0 &  1\\
0 &  0 &  0 &  0 &  1 &  1 &  1\\
0 & 0 &  0 &  0 &  0 &  0 &  1\\
0 &  0 &  0 &  0 &  1 &  1 &  1\\
0 &  1 &  0 &  1 &  0 &  0 &  1\\
0 &  1 &  0 &  1 &  0 &  0 &  1\\
1 &  1 &  1 &  1 &  1 &  1 &  0\\
\end{smallmatrix}\right),\, &
\ev^1_2 &= \left(\begin{smallmatrix}
0 & 0 &  0 &  0 &  0 &  a &  0\\
0 &  0 &  0 &  0 &  1 &  1 &  1\\
0 &  0 &  0 &  0 &  a &  a &  1\\
0 &  0 &  0 &  0 &  0 &  1 &  0\\
0 &  1 &  a &  0 &  0 &  a &  0\\
a &  1 &  a &  1 &  a &  0 &  1\\
0 &  1 &  1 &  0 &  0 &  1 &  0\\
\end{smallmatrix}\right),\, &
\ev^1_3 &= \left(\begin{smallmatrix}
0 & a &  0 &  0 &  0 &  0 &  0\\ a &  0 &  a &  a &  1 &  1 &  1\\
0 &  a &  0 &  a &  0 &  0 &  1\\ 0 & a &  a &  0 &  0 &  1 &  0\\
0 &  1 &  0 &  0 &  0 &  0 &  0\\  0 &  1 &  0 &  1 &  0 &  0 &  1\\
0 &  1 &  1 &  0 &  0 &  1 &  0\\
\end{smallmatrix}\right), \\
\ev^2_1 &=\left(\begin{smallmatrix}
0 &  0 &  0 &  0 &  0 &  1 &  0\\ 0 &  0 &  0 &  0 &  0 &  0 &  a\\
0 &  0 &  0 &  0 &  1 &  b &  \frac{1}{2}ab+\frac{1}{2}\\
0 &  0 &  0 &  0 &  0 &  0 &  0\\
0 &  0 &  1 &  0 &  0 &  0 & 0\\
1 &  0 &  b &  0 &  0 &  0 &  1\\
0 &  a &  \frac{1}{2}ab+\frac{1}{2} &  0 &  0 &  1 &  0\\
\end{smallmatrix}\right),\,&
\ev^2_2 &=\left(\begin{smallmatrix}
0 &  0 &  0 &  0 &  0 &  1 &  0\\  0 &  0 &  0 &  0 &  0 &  -1 & 1\\
0 &  0 &  0 &  0 &  1 &  1 &  0\\  0 &  0 &  0 &  0 &  0 &  0 &  0\\
0 &  0 &  1 &  0 &  0 & 1 &  0\\  1 &  -1 &  1 &  0 &  1 &  0 &  -1\\
0 &  1 &  0 &  0 &  0 &  -1 &  0\\
\end{smallmatrix}\right),\,&
\ev^3_1 &=\left(\begin{smallmatrix}
0 &  0 &  0 &  0 &  0 &  1 &  0\\  0 &  0 &  0 &  0 &  0 &  a &  0\\
0 & 0 &  0 &  0 &  0 &  b &  0\\  0 &  0 &  0 &  0 &  0 &  c &  0\\
0 &  0 &  0 &  0 &  0 &  d &  0\\ 1 &  a &  b &  c &  d &  0 &  1\\
0 &  0 &  0 &  0 &  0 &  1 &  0\\
\end{smallmatrix}\right),\, &
\ev^3_2 &=\left(\begin{smallmatrix}
0 & a &  0 &  0 &  0 &  1 &  b\\
a &  0 &  0 &  0 &  0 &  c &  d\\  0 &  0 &  0 &  0 &  0 &  0 &  0\\
0 &  0 &  0 &  0 &  0 &  0 &  0\\  0 &  0 &  0 &  0 &  0 &  0 &  0\\
1 &  c &  0 &  0 &  0 &  0 &  1\\  b &  d &  0 &  0 &  0 &  1 &  0\\
\end{smallmatrix}\right),\\
\ev^4_1 &=\left(\begin{smallmatrix}
0 &  0 &  0 &  0 &  0 &  0 &  0\\ 0 &  0 &  -1 &  0 &  0 &  0 &  1\\
0 &  -1 & 0 &  1 &  -a &  a &  b\\  0 &  0 &  1 &  0 &  0 &  0 &  c\\
0 &  0 &  -a &  0 &  0 &  0 &  c\\ 0 &  0 &  a &  0 &  0 &  0 &  1\\
0 &  1 &  b &  c &  c &  1 &  0\\
\end{smallmatrix}\right),\,& 
\ev^4_2 &=\left(\begin{smallmatrix}
0 &  0 &  0 &  0 &  0 &  0 &  0\\  0 &  0 &  1 &  1 &  0 &  1 &  1\\
0 &  1 &  0 & -1 &  -a &  a &  0\\  0 &  1 &  -1 &  0 &  -b &  0 &  b\\
0 &  0 &  -a &  -b &  0 &  a &  b\\ 0 &  1 &  a &  0 &  a &  0 &  1\\
0 &  1 &  0 &  b &  b &  1 &  0\\
\end{smallmatrix}\right)\,&
\ev^5_1 &=\left(\begin{smallmatrix}
0 &  0 &  0 &  0 &  0 &  0 &  0\\  0 &  0 &  0 &  0 &  0 &  0 &  0\\
0 & 0 &  0 &  a &  0 &  b &  c\\  0 &  0 &  a &  0 &  0 &  d &  e\\
0 &  0 &  0 &  0 &  0 &  0 &  0\\ 0 &  0 &  b &  d &  0 &  0 &  1\\
0 &  0 &  c &  e &  0 &  1 &  0\\
\end{smallmatrix}\right),\,&
\ev^5_2 &=\left(\begin{smallmatrix}
0 & 0 &  0 &  0 &  0 &  0 &  0\\  0 &  0 &  0 &  0 &  0 &  0 &  a\\
0 &  0 &  0 &  0 &  0 &  0 &  b\\ 0 &  0 &  0 &  0 &  0 &  0 &  c\\
0 &  0 &  0 &  0 &  0 &  0 &  d\\ 0 &  0 &  0 &  0 &  0 &  0 &  1\\
0 &  a &  b &  c &  d &  1 &  0\\
\end{smallmatrix}\right),
\end{alignat*}
\begin{alignat*}{4}
\ev^5_3 &=\left(\begin{smallmatrix}
0 & 0 &  0 &  0 &  0 &  0 &  0\\0 &  0 &  0 &  0 &  0 &  a &  b\\
0 &  0 &  0 &  0 &  0 &  0 &  0\\ 0 & 0 &  0 &  0 &  0 &  0 &  0\\
0 &  0 &  0 &  0 &  0 &  0 &  0\\  0 &  a &  0 &  0 &  0 &  0 &  1\\
0 & b &  0 &  0 &  0 &  1 &  0\\
\end{smallmatrix}\right),\, &
\ev^5_4 &=\left(\begin{smallmatrix}
0 &  0 &  0 &  0 &  0 &  0 &  0\\ 0 &  0 &  0 &  0 &  0 &  a &  0\\
0 &  0 &  0 &  0 &  0 &  b &  0\\ 0 &  0 &  0 &  0 &  0 &  c & 0\\
0 &  0 & 0 &  0 &  0 &  d &  0\\  0 &  a &  b &  c &  d &  0 &  1\\
0 &  0 &  0 &  0 &  0 &  1 &  0\\
\end{smallmatrix}\right),\, &
\ev^5_5 &=\left(\begin{smallmatrix}
0 &  0 &  0 &  0 &  0 &  0 &  0\\  0 &  0 &  0 &  0 &  0 &  0 & 0\\
0 &  0 &  0 &  0 &  0 &  0 &  0\\  0 &  0 &  0 &  0 &  0 &  0 &  0\\
0 &  0 &  0 &  0 &  0 & a &  b\\  0 &  0 &  0 &  0 &  a &  0 &  1\\
0 &  0 &  0 &  0 &  b &  1 &  0\\
\end{smallmatrix}\right),\, &
\ev^5_6 &=\left(\begin{smallmatrix}
0 &  a &  0 &  0 &  0 &  0 &  0\\ a &  0 &  0 &  0 &  0 &  0 & 0\\
0 &  0 &  0 &  b &  0 &  0 &  0\\  0 &  0 &  b &  0 &  0 &  0 &  0\\
0 &  0 &  0 &  0 & 0 &  0 &  0\\  0 &  0 &  0 &  0 &  0 &  0 &  1\\
0 &  0 &  0 &  0 &  0 &  1 &  0\\
\end{smallmatrix}\right),\\
\ev^6_1 &=\left(\begin{smallmatrix}
0 &  a &  b &  c &  d &  e &  f\\  a &  0 &  0 &  0 &  0 &  0 &  0\\
b & 0 &  0 &  0 &  0 &  0 &  0\\  c &  0 &  0 &  0 &  0 &  0 &  0\\
d &  0 &  0 &  0 &  0 &  0 &  0\\ e &  0 &  0 &  0 &  0 &  0 &  0\\
f &  0 &  0 &  0 &  0 &  0 &  0\\
\end{smallmatrix}\right),\, &
\ev^6_2 &=\left(\begin{smallmatrix}
0 & 0 &  a &  0 &  0 &  0 &  b\\  0 &  0 &  0 &  0 &  0 &  0 &  0\\
a &  0 &  0 &  0 &  0 &  0 &  c\\ 0 &  0 &  0 &  0 &  0 &  0 &  0\\
0 &  0 &  0 &  0 &  0 &  0 &  0\\ 0 &  0 &  0 &  0 &  0 &  0 &  0\\
b &  0 &  c &  0 &  0 &  0 &  0\\
\end{smallmatrix}\right), \, &
\ev^6_3 &=\left(\begin{smallmatrix}
0 & 0 &  0 &  a &  0 &  b &  0\\  0 & 0 &  0 &  0 &  0 &  0 &  0\\
0 &  0 &  0 &  0 &  0 &  0 &  0\\ a & 0 &  0 &  0 &  0 &  c &  0\\
0 &  0 &  0 &  0 &  0 &  0 &  0\\  b &  0 &  0 &  c &  0 &  0 &  0\\
0 & 0 &  0 &  0 &  0 &  0 &  0\\
\end{smallmatrix}\right), \, &
\ev^6_4 &=\left(\begin{smallmatrix}
0 &  a &  0 &  0 &  b &  0 &  0\\ a &  0 &  0 &  0 &  c &  0 &  0\\
0 &  0 &  0 &  0 &  0 &  0 &  0\\ 0 &  0 &  0 &  0 &  0 &  0 & 0\\
b &  c & 0 &  0 &  0 &  0 &  0\\  0 &  0 &  0 &  0 &  0 &  0 &  0\\
0 &  0 &  0 &  0 &  0 &  0 &  0\\
\end{smallmatrix}\right), 
\end{alignat*}
\begin{equation*}
\ev^6_5 =\left(\begin{smallmatrix}
0 &  0 &  0 &  0 &  0 &  0 &  0\\  0 &  0 &  0 &  0 &  a &  0 & 0\\
0 &  0 &  0 &  0 &  0 &  0 &  b\\  0 &  0 &  0 &  0 &  0 &  c &  0\\
0 &  a &  0 &  0 & 0 &  0 &  0\\  0 &  0 &  0 &  c &  0 &  0 &  0\\
0 &  0 &  b &  0 &  0 &  0 &  0\\
\end{smallmatrix}\right).
\end{equation*}
Moreover, any two solutions $\ep^m_k$, $\ep^n_l$ with $m\neq n$ are not equivalent.   
\end{tvr}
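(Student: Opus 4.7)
The plan is to apply the algorithm of \cite{HN1} to the reduced system produced in the previous proposition, namely the $32$ two--term equations generated from \eqref{eq1}, \eqref{eq2} and the $12$ three--term equations generated from \eqref{eq3}. The strategy is a systematic case split on the pattern of vanishing relevant contraction parameters, using the equivalence \eqref{sequiv} to merge classes as early as possible.

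First I would exploit the normalisation freedom: since \eqref{normat} involves seven independent nonzero scalars $a_k$, $k\in I$, up to seven multiplicatively independent nonzero parameters can be rescaled to $1$ along the way. I would then branch on which of the $21$ parameters indexed by $\I$ vanish. The two--term equations \eqref{eq1}, \eqref{eq2} have the shape $xy = zw$ with each factor a relevant parameter, so vanishing of a single factor forces a zero on the opposite side, and the $G$--orbit of this implication propagates zero patterns throughout $\I$. This cascade prunes the search tree drastically, and the action \eqref{subst} of $G$ on the support of $\ev$ lets me keep only one representative per $G$--orbit of admissible supports.

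Once a zero pattern is fixed I would read off what the remaining equations say about the surviving unknowns. In the bulk of the branches the surviving system is monomial, so normalisation collapses every nonzero parameter either to $1$ or to a free complex variable, yielding the tabulated $\ev^m_k$ with free entries $a,b,c,\dots$. The exceptional branches are those in which the three--term equation \eqref{eq3} remains active; there the additive relation produces the entry $\tfrac{1}{2}ab+\tfrac{1}{2}$ in $\ev^2_1$ and the $\pm 1$ patterns in $\ev^2_2$, $\ev^4_1$, $\ev^4_2$.

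The main obstacle I foresee is bookkeeping. One must ensure that no zero pattern is skipped, that two branches producing the same parametric family are identified as such via \eqref{sequiv}, and that the free parameters in each family really range over all of $\bC$ with no further constraint hiding in an unused equation. This is exactly the task for which the algorithm of \cite{HN1} is engineered, and I would follow it verbatim.

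For the final assertion, that $\ev^m_k \not\sim \ev^n_l$ whenever $m\neq n$, I would exhibit $\sim$--invariants that separate the families. Both the cardinality of the support $\{(i,j)\in\I\mid \ev_{ij}\neq 0\}$ and the $G$--orbit type of this support are preserved under the action \eqref{subst} and under the normalisation \eqref{normat}, hence they are genuine $\sim$--invariants. Inspection of the tabulated matrices shows that these two combinatorial invariants already take distinct values across the seven classes $m=0,1,\dots,6$, so pairwise non--equivalence across different values of $m$ follows without any further computation.
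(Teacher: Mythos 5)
Your strategy for producing the list itself is essentially the paper's: both defer the case analysis to the algorithm of \cite{HN1}, branching on vanishing patterns, propagating zeros through the two--term equations, and using the $G$--action together with the normalization \eqref{normat} to keep one representative per class. The paper organizes this as a fixed chain of seven steps, each assuming designated parameters nonzero (e.g.\ $\ev_{(010)(100)}\neq 0$, $\ev^\ci_{(001)(101)}\neq 0$) and then passing to the complementary branch by imposing the $G$--generated non--equivalence system $\mS^k$ built from the product of those parameters being zero; your free--form split on supports would, if carried out, reach the same matrices, so I do not regard that as a gap beyond the (shared) reliance on an external algorithm.

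The genuine gap is in your argument for the clause $\ev^m_k\not\sim\ev^n_l$ for $m\neq n$. The parameters $a,\dots,f$ range over all of $\bC$, including $0$, so each family has many supports, and your claimed invariants do not do what you assert. Cardinality fails outright: $\ev^0_1$ with $a=b=0$ and $\ev^1_1$ both have exactly $10$ nonzero relevant parameters. If by ``$G$--orbit type of the support'' you mean the multiset of orbits of its elements, that fails too: $\ev^2_1$ with $ab=-1$ and $\ev^3_1$ with $c=0$ both have supports consisting of one $\ci$--pair and four pairs from the $12$--point orbit. If you mean the full $G$--orbit of the support as a subset of $\I$, the invariant plausibly does separate the classes, but establishing that requires comparing the $G$--orbits of the supports of \emph{all} specializations of \emph{all} listed families, a substantial computation that ``inspection'' does not constitute. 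The paper obtains cross--class non--equivalence structurally: every solution at level $n>m$ satisfies $\mS^m$, a $G$-- and normalization--invariant (hence $\sim$--invariant) collection of conditions of the form ``a product of two relevant parameters vanishes,'' while every specialization at level $m$ violates $\mS^m$ because the two designated entries are the fixed constants $\pm1$ rather than parameters. You would need either to reproduce that mechanism or to actually carry out the support--orbit comparison; as written, the non--equivalence claim is unproved.
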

\begin{proof}
Repeatedly using Theorem 7 from \cite{HN1} we solve the system $\sS_\Ga$ in seven steps:
\begin{enumerate}
\item The explicit solution of the system $\sS_\Ga$ under the
assumptions $\ev_{(010)(100)}\neq 0$ and $\ev^\ci_{(001)(101)}\neq
0$ is given as a matrix which is strongly equivalent to $\ev^0_1$.
\item In order to eliminate solutions equivalent to those obtained in the previous step, the system generated from the equation $\ev_{(010)(100)}\ev^\ci_{(001)(101)}=0$ has to be satisfied. This non--equivalence system is denoted by $\mS^0$.
Solutions of $\sS_\Ga$ and $\mS^0$ under the assumptions
$\ev_{(010)(100)}\neq 0$ and $\ev^\bul_{(111)(110)}\neq 0$ are
described by $3$ parametric matrices which are strongly equivalent to the matrices $\ev^1_1,\, \ev^1_2$ and $\ev^1_3$.
\item The solutions of $\sS_\Ga\cup \mS^0$ together with the non--equivalence system $\mS^1$ generated by $$\ev_{(010)(100)}\ev^\bul_{(111)(110)}=0$$ under the assumptions
$\ev_{(010)(100)}\neq 0, \ev^\ci_{(001)(010)}\neq 0$, and
$\ev_{(101)(110)}\neq 0$ are given by two parametric matrices
$\ev^2_1$ and $\ev^2_2$. 
\item The solutions of $\sS_\Ga\cup \mS^0 \cup\mS^1 $ together with the non--equivalence system $\mS^2$ generated by
$$\ev_{(010)(100)}\ev^\ci_{(001)(010)}\ev_{(101)(110)}=0$$ under the assumptions $\ev_{(010)(100)}\neq 0,
\ev^\ci_{(001)(010)}\neq 0$ are strongly equivalent to the solutions
$\ev^3_1$ and $\ev^3_2$.
\item The solutions of $\sS_\Ga\cup \mS^0 \cup\mS^1 \cup\mS^2$ together with the non--equivalence system $\mS^3$ generated by
$$\ev_{(010)(100)}\ev^\ci_{(001)(010)}=0$$ under the assumptions $\ev_{(010)(100)}\neq 0$, $\ev_{(111)(101)}\neq 0$ are strongly equivalent to the solutions
$\ev^4_1$ and $\ev^4_2$.
\item The solutions of $\sS_\Ga\cup \mS^0 \cup\mS^1 \cup\mS^2\cup \mS^3$ together with the non--equivalence system $\mS^4$ generated by
$$\ev_{(010)(100)}\ev_{(111)(101)}=0$$ under the assumption $\ev_{(010)(100)}\neq 0$ are strongly equivalent to the solutions
$\ev^5_1,\,\ev^5_2,\,\ev^5_3,\,\ev^5_4,\,\ev^5_5$ and $\ev^5_6$.
\item Finally, the non--equivalence system
$\mS^5$ generated by $\ev_{(010)(100)}=0$ enforces zero
value of all contraction parameters in the orbit --- these variables are not marked by '$+$' or '$-$' --- and ensures fulfilment
of all previous non--equivalence systems. Due to these $12$ zero
contraction parameters, all two--term equations are satisfied and
three--term equations are reduced to the system generated by
$\ev^\ci_{(001)(100)}\ev^\bul_{(011)(010)}=0$. Corresponding solutions are strongly equivalent to
$\ev^6_1,\,\ev^6_2,\,\ev^6_3,\,\ev^6_4$ and $\ev^6_5$.
\end{enumerate}
\end{proof}

Even though any two solutions $\ep^m_k$, $\ep^n_l$ with $m\neq n$ are not equivalent in Proposition \ref{solving}, the list of solutions still contains equivalent solutions if $m=n$. Since equivalent solutions have the same number of zeros $\nu(\ev)=|\set{(i\,j) \in \I}{\ev_{ij}= 0}|$,
we discuss when elements of contraction matrices vanish and divide
the solutions according to $\nu(\ev)$. Besides all possible combinations of zero and non--zero
parameters, the case $a=-1/b$ has to be also considered for the solution
$\ev^2_1$. After that, we collect all solutions with the same
support $S(\ev) = \set{(i\,j)\in \I}{\ev_{ij} \neq 0}$. For example, solutions $\ev^5_4$ with $a=b=c=0$ and
$\ev^5_5$ with $b=0$ have the same support $S(\ev^5_4)=S(\ev^5_5)=\{{(110)(010), (010)(100)}  \}$ but they also represent
the same solution --- only with different notation of the parameters. 

The notion of a projection $\hat\ev$ of a solution $\ev$ defined via $\hat\ev_{ij}=\operatorname{sgn} |\ev_{ij}|$ is useful for sorting the solutions of contraction equations. Since there is only one solution with the given
support $S(\ev)$ and thus $\hat\ev^1 \sim \hat\ev^2$ would imply $\ev^1 \sim \ev^2$, we eliminate those contraction matrices which have equivalent projections. Let us note that the solution without zeros, i.e. $\ev^0_1$, where
$a,b,c,d\neq0$, is strongly equivalent to the trivial solution
$\ev^{0,1}$ which has all relevant contraction parameters equal to
1. Thus, any contraction matrix without zeros has the form of the
normalization matrix \eqref{normat}. Thus, by purging the list of solutions for overlaps and equivalencies, we obtain $89$ normalized representatives of equivalence classes of solutions of $\sS_\Ga$.

It remains to answer the question of equivalence of the system of contraction equations \eqref{general} with simplified two--term systems~\eqref{extend} and~\eqref{ropa}. It can be verified directly that both systems~\eqref{extend} and~\eqref{ropa} reduce the number of solutions of ~\eqref{general}:
\begin{enumerate}[(i)]
\item among $89$ contraction matrices, there are 55 solutions which fulfill the system~\eqref{extend}. On the contrary, there are 6 parametric solutions which fulfill this system only if all their parameters are equal to $1$ and $28$ non--parametric solutions which do not solve this system of two--term equations,

\item there are $74$ contraction matrices which satisfy the system~\eqref{ropa}. On the contrary, there is one parametric contraction matrix which fulfils this system only if its parameter is equal to $1$ and $14$ contraction matrices which do not fulfill this system of two--term equations at all.
\end{enumerate} 
The final list of all $89$ normalized representatives of equivalence
classes of solutions, with the solutions which do not satisfy the simplified systems~\eqref{extend} and~\eqref{ropa} marked, is located in Appendix A.1.

\subsection{Continuous and discrete graded contractions}\

A solution $\ep \in\sC_\Ga$ is called continuous if there exist continuous functions $$a_i:(0,1]
\rightarrow \bC\setminus\{0\},\ i\in I$$ such that for all relevant contraction parameters 
$$\ev_{ij} = \lim\limits_{t\rightarrow 0} \frac{a_i(t)a_j(t)}{a_{i+j}(t)}$$
holds; otherwise it is called discrete. If
the functions $a_i$ are of the form $a_i(t) = t^{n_i}$, $n_i\in \bZ$ then a continuous graded contraction becomes a generalized In\"on\"u--Wigner contraction \cite{WW2}. 

An efficient tool for distinguishing between continuous and
discrete graded contractions was developed in \cite{WW2}. So
called higher--order identities allow us to identify discrete
graded contractions. Let us consider an equation where on both
sides stand products of $r$ relevant contraction parameters. If
this equation holds for all normalization matrices
$\alpha_{ij}(t)=\frac{a_i(t)a_j(t)}{a_{i+j}(t)}$, it will also hold for their limit,
i.e. for any continuous solution. Thus, any
equation of the type
\begin{equation*}
\ev_{i_1}\ev_{i_2}\dots\ev_{i_r}=\ev_{j_1}\ev_{j_2}\dots\ev_{j_r},
\end{equation*}
where $r\in \bN$ and $\{i_1,i_2,\ldots,i_r\}$, $\{j_1,j_2,\ldots,j_r\}$
are disjoint sets of relevant pairs of
grading indices, is called higher--order identity of order $r$, if it
holds for all normalization matrices, but is violated by some contraction matrix from $C_\Gamma$.

Higher--order identities can be deduced from the identities
which hold for the normalization matrix~\eqref{normat}. For example the equation
\begin{equation}\label{gell_hoipr}
\alpha_{(001)(100)}\alpha_{(011)(010)} =
\frac{a_{(001)}a_{(100)}}{a_{(101)}}\frac{a_{(011)}a_{(010)}}{a_{(001)}}
=
\frac{a_{(011)}a_{(110)}}{a_{(101)}}\frac{a_{(010)}a_{(100)}}{a_{(110)}}
= \alpha_{(011)(110)}\alpha_{(010)(100)}
\end{equation}
is evidently satisfied for any normalization matrix. However,
considering the contraction matrix $\ev^4_2$ we get $0=-b$ and thus
\eqref{gell_hoipr} is violated for any $b\neq 0$. Therefore, the
equation \eqref{gell_hoipr} represents second order identity and the solution $\ev^4_2$ with  $b\neq 0$ is discrete. Applying the symmetry group $G$ to \eqref{gell_hoipr}, we can
write the 24-point orbit of second order identities generated from the equation
\begin{equation*}
\ev^\ci_{(001)(100)}\ev^\bul_{(011)(010)} =
\ev_{(011)(110)}\ev_{(010)(100)}.
\end{equation*}

In a similar way, all $57$ second order identities are found. These
identities are divided into 5 orbits and their representatives and the
number of the resulting identities under the action of $G$ are
written in Table~\ref{tab:hoig}. For each solution of the system
$\sS_\Ga$, the set of second order identities allows us to distinguish whether the solution is continuous or discrete.
Any discrete contraction violates at least one of the identities
listed in Table \ref{tab:hoig}. The remaining solutions are explicitly found as limits of
$\alpha_{ij}(t)=t^{n_i+n_j-n_{i+j}}$, i.e. they correspond to generalized In\"on\"u--Wigner contractions. Among $89$ solutions in Appendix A.1 there are $50$ continuous
and $36$ discrete ones. The remaining 3 solutions are
continuous only for a special value of their parameters, otherwise
they are discrete. 

\begin{table}
\begin{center}
\renewcommand{\arraystretch}{1.50}
\begin{tabular}[b]{|c|c|}
 \hline
Representative equation & Number of equations \\
\hline\hline $ \ev^\ci_{(001)(100)}\ev^\bul_{(011)(010)} = \ev_{(011)(110)}\ev_{(010)(100)} $ & $24$\\
\hline $ \ev^\ci_{(001)(100)}\ev^\bul_{(101)(100)} = \ev_{(110)(100)}\ev_{(010)(100)}$ & $12$\\
\hline $ \ev_{(111)(010)}\ev_{(011)(100)} = \ev_{(011)(110)}\ev_{(010)(100)}$ & $12$\\
\hline $ \ev_{(111)(100)}\ev_{(011)(100)} = \ev_{(110)(100)}\ev_{(010)(100)}$ & $6$\\
\hline $ \ev^\ci_{(001)(101)}\ev^\ci_{(001)(100)} =\ev^\ci_{(001)(111)}\ev^\ci_{(001)(110)}$ & $3$\\
\hline
\end{tabular}
\end{center}
\smallskip
\caption{Orbits of the second order identities for the Gell-Mann
grading of $\slc$.}\label{tab:hoig}
\end{table}

\section{The contracted Lie algebras}

\subsection{Identification of the contracted algebras}\

All $89$ solutions of the system of contraction equations $\sS_\Ga$ for
the Gell--Mann graded $\spl(3,\bC)$ are divided into 14 groups
according to the number of zeros $\nu$ among the 21 relevant
contraction parameters. The numbers of contraction matrices in these
groups are summarized in the following table:
$$
\begin{tabular}{l||c|c|c|c|c|c|c|c|c|c|c|c|c|c}
\parbox[l][18pt][c]{0pt}{} Number of zeros $\nu$ & 0 & 6 & 9 & 11 & 12 & 13 & 14 & 15 & 16 & 17 & 18 & 19 & 20 & 21 \\
\hline \parbox[l][18pt][c]{0pt}{} Number of solutions & 1 & 2 & 1 & 3 & 2 & 1 & 2 & 9 & 12 & 18 & 23 & 11 & 3 & 1 \\
\end{tabular}
$$
Contraction matrices are denoted $\varepsilon^{\nu,i}$, where the
second index $i$ is numbering solutions with the same number of
zeros $\nu$. The contracted Lie algebra given by solution
$\varepsilon^{\nu,i}$ is denoted $\mG_{\nu,i}$.

There are two trivial solutions: $\varepsilon^{21,1}$
(with 21 zeros) corresponding to the 8--dimensional abelian Lie and
$\varepsilon^{0,1}$ (without zeros) corresponding to the initial Lie
algebra $\spl(3,\bC)$. Among the remaining 87 nontrivial solutions,
8 solutions depend on one nonzero complex parameter $a$ and two
depend on two nonzero complex parameters $a$, $b$. The corresponding
parametric families of Lie algebras --- the parametric Lie algebras
--- are denoted by $\mG_{\nu,i}(a)$, $\mG_{\nu,i}(a,b)$. Each of
these parametric Lie algebras will be counted as one algebra.

In order to classify our results we use an extension of the identification procedure from \cite{HN1}.
We have to identify 87/10 Lie algebras --- the number following the slash refers to the number of parametric families among all 87 algebras. Our extended algorithm for identification, applied to the Gell-Mann graded contractions, consists of the following steps (for more detailed calculation algorithm of steps 1,2 and 5 see \cite{ide}):
\leftmargini 20pt
\begin{enumerate}
\item \textit{Splitting of the maximal central component} \\
Whenever the complement of the derived algebra $D(\mL)=[\mL,\mL]$ into the center $C(\mL)$ of the Lie algebra $\mL$ is nonzero, $\mL$ can be decomposed into the direct sum $\mL=\mL' \oplus k\mA_1$, where $k=\dim C(\mL)/(C(\mL)\cap D(\mL))$, $k\mA_1$ is an abelian algebra of dimension $k$ and non--abelian part $\mL'$ fulfills $C(\mL')\subseteq D(\mL')$. The separation of maximal central component is possible in 66/8 cases of our contracted Lie algebras. Further, we proceed with non--abelian parts only.

\item \textit{Decomposition into a direct sum of indecomposable ideals} \\
A complex Lie algebra $\mL$ is decomposable if and only if there exist an idempotent $0\neq E=E^2 \neq 1$ in the centralizer $C_R:=\set{x\in R}{[x,y]=0,\ \forall y\in \ad(\mL)}$ of the adjoint representation of $\mL$ in the ring $R=\bC^{n,n}$ of all $n\times n$ complex matrices. If so then $\mL = \mL_0 \oplus \mL_1$ where $\mL_0$ and $\mL_1$ are eigen-subspaces of the idempotent $E$ corresponding to the eigenvalues $0,1$. There are only 7/1 decomposable Lie algebras among our results. All of them decompose into the sum of two indecomposable ideals. From now on, we proceed with indecomposable Lie algebras only. Thus, in further steps we deal with 94/10 indecomposable Lie algebras. These algebras are divided according to their dimensions
as follows:
\begin{center}
\begin{tabular}{l||c|c|c|c|c}
\parbox[l][18pt][c]{0pt}{} Dimension         &  3   &    5   &  6  &   7   &  8\\
\hline \parbox[l][18pt][c]{0pt}{} Number of algebras  & 22/1 &  12/2 &  10 &  29/5 & 21/2\\
\end{tabular}
\end{center}

\item \textit{Series of ideals} \\
We calculate the derived series $ D^0(\mL) = \mL, \ D^{k+1}(\mL) = [D^k(\mL),D^k(\mL)]$, the lower central series $\mL^1 = \mL,\ \mL^{k+1} = [\mL^k,\mL]$ and the upper central series $C^1(\mL) = C(\mL), \ C^{k+1}(\mL)/C^k(\mL) = C(\mL/C^k(\mL))$ for each Lie algebra. Dimensions of ideals in theses series are invariants of Lie algebra $\mL$, therefore we divide all investigated Lie algebras into the classes according to these invariants. The results are 54/1 nilpotent, 33/9 solvable (non--nilpotent) and 7 non--solvable Lie algebras.

\item \textit{$(\alpha,\beta,\gamma)$--derivations} \\
Let $\alpha,\beta,\gamma$ be complex numbers. The dimension of the space of $(\alpha,\beta,\gamma)$--derivations of $\mL$
\begin{equation*}
\der_{(\alpha,\beta,\gamma)}\mL = \set{A\in \gl(\mL)}{\alpha A[x,y] = \beta[Ax,y] + \gamma[x,Ay], \ \forall x,y\in\mL}
\end{equation*} is an invariant of Lie algebra $\mL$ \cite{HN5}. As a special case, the algebra of derivations is also obtained $\der \mL=\der_{(1,1,1)}\mL $. We denote by
$\dim_{(\alpha,\beta,\gamma)}\mL$ the 6--tuple formed by the dimensions of the following spaces
\begin{equation*}
\quad\quad \dim_{(\alpha,\beta,\gamma)}\mL= [\der\mL,\ \der_{(0,1,1)}\mL,\ \der_{(1,1,0)}\mL,\
\der\mL\cap\der_{(0,1,1)}\mL,\
\der_{(1,1,-1)}\mL,\ \der_{(0,1,-1)}\mL].
\end{equation*}
Values of these invariants divide our algebras
into 28 classes of nilpotent, 17 classes of solvable and 4 classes
of non--solvable Lie algebras.

\item \textit{Determination of the radical, the Levi decomposition and the nilradical} \\
There are 7 non--solvable Lie algebras for which we determine radicals according to $R(\mL) = \set{x\in\mL}{\Tr(\ad(x)\ad(y))=0, \, \forall y\in D(\mL)}$ and find their Levi decompositions. The Levi decomposition is nontrivial only in 3 cases, two of them with abelian radical. For solvable Lie algebras we determine their nilradicals $N(\mL)$.

\item \textit{Casimir operators}\\
The elements in the center of the universal enveloping Lie algebra $U(\mL)$ of Lie algebra $\mL$ -- Casimir operators -- can be found as follows \cite{Alonso}. Represent elements of basis
$(e_1,\ldots,e_n)$ in $\mL$ by the vector fields
\begin{equation*}
e_i \rightarrow \widehat x_i = \sum_{j,k=1}^n c_{ij}^k x_k
\frac{\partial}{\partial x_j},
\end{equation*} which act on the space of continuously differentiable functions
$F(x_1,\ldots,x_n)$ of $n$ variables. Find formal invariants i.e. such functions $F$ which solve the following linear system of first-order partial differential equations $\widehat{x_i} F(x_1,\ldots,x_n) =0 $ for all $i=1,\ldots,n$.
The number of functionally
independent solutions of this system
\begin{equation*}
\tau(\mL) = \dim(\mL) - \sup_{x_1,\ldots,x_n\in\bC} \operatorname{rank}
M_\mL,
\end{equation*} where $M_\mL$ is skew--symmetric matrix with entries
$(M_\mL)_{ij} = \sum_{k=1}^n c_{ij}^k x_k$, is an invariant of $\mL$. Casimir operators correspond to the polynomial formal invariants.  This correspondence is
provided by symmetrization: any term
$x_{k_1}\ldots x_{k_p}$ of polynomial $F(x_1,\ldots,x_n)$ in
commuting variables $x_i$ is replaced by symmetric term in
non--commuting basis elements $e_1,\ldots,e_n\in\mL$ as follows
\begin{equation*}
x_{k_1}\ldots x_{k_p} \mapsto \frac{1}{p\, !}\sum_{\sigma\in S_p}
e_{k_{\sigma(1)}}\ldots e_{k_{\sigma(p)}}.
\end{equation*} We determine the number of formal invariants $\tau(\mL)$ for all investigated algebras. Since all independent formal invariants of complex nilpotent Lie algebras can be found in polynomial form, we also determine all Casimir operators for nilpotent Lie algebras.

\item \textit{Isomorphisms}\\
Having two $n$-dimensional Lie algebras $\mL$ and $\wt\mL$ with the same values of
all above listed invariant characteristics, we search for isomorphisms explicitly. Thus, we solve the following system of $n^2(n-1)/2$ quadratic equations 
\begin{equation*}
\sum_{r=1}^{n} c_{ij}^{r}A_{kr} = \sum_{\mu, \nu =1}^{n} \tilde
c_{\mu\nu}^k A_{\mu i}A_{\nu j}, \qquad i=1,\ldots,n-1,\quad j=
i,\ldots,n,\quad k\in\hat{n},
\end{equation*}
where $c_{ij}^k, \tilde
c_{ij}^k$ are structural constants of $\mL, \wt\mL$ and $A_{i,j}$ are components of $n\times n$ complex regular matrix representing the isomorphism.
Solving this system for all Lie algebras in the same class we find: 26 isomorphic algebras in
the classes of nilpotent, 16/4 isomorphic algebras in the classes of
solvable and 3 isomorphic algebras in the classes of non--solvable
Lie algebras. Omitting these algebras we get only one algebra in
each class. Thus, all algebras are now identified up to
ranges of parameters for the parametric algebras. 

\item \textit{Invariant functions}\\
The invariants listed above are not able to distinguish Lie algebras within parametric families. Therefore we use the concept of invariant functions \cite{HN5,HN6}, which enables us to classify one--parametric families of Lie algebras.
The invariant function $\psi_\mL$ which arises from $(\alpha,\beta,\gamma)$--derivations is defined by
\begin{equation*}
\psi_{\mL}(\alpha) = \dim(\der_{(\alpha,1,1)}\mL).
\end{equation*}
We calculate this invariant function for all one--parametric Lie algebras. Together with isomorphisms it allows us to determine the ranges of parameters for 3 one--parametric families. The invariant functions $\psi_\mL$ of these three cases are listed in Appendix A.3.
\end{enumerate}

\subsection{The results of identification}\

There are only 4 mutually non--isomorphic decomposable Lie algebras
among the graded contractions of Gell--Mann graded $\spl(3,\bC)$:
\begin{center}
\renewcommand{\arraystretch}{1.25}
\begin{tabular}{lr}
Non--solvable (discrete contraction) & $\mG_{9,1} \cong \mG'_{18,8} \oplus \mG'_{18,8} \oplus 2\mA_1$ \\
Solvable (discrete contractions) & $\mG_{17,2}(a) \cong \mG_{13,1} \cong \mG'_{19,2} \oplus \mG'_{19,2} \oplus 2\mA_1$\\
 & $\mG_{18,2} \cong \mG'_{19,2} \oplus \mG'_{20,1} \oplus 2\mA_1$ \\
Nilpotent (continuous contractions) & $ \mG_{19,1} \cong \mG_{19,11}
\cong\mG_{17,18} \cong \mG'_{20,1} \oplus \mG'_{20,1} \oplus 2\mA_1$\\
\end{tabular}
\end{center}
where $\mA_1$ stands for one--dimensional abelian Lie algebra.

The list of all isomorphisms among
the indecomposable graded contractions of the Gell--Mann graded $\spl(3,\bC)$ is presented in Table~\ref{tab:iso}.
\begin{table}
\begin{center}
\renewcommand{\arraystretch}{1.25}
\tabcolsep 10pt
\begin{tabular}{|l||l|l|}
\hline
\multicolumn{1}{|c||}{Non--solvable} & \multicolumn{2}{c|}{Nilpotent} \\
\hline  $\mG_{18,21} \cong \mG_{18,8}$ & $\mG_{20,3} \cong \mG_{20,2} \cong \mG_{20,1}$ & $\mG_{19,10} \cong  \mG_{19,4}$   \\
\hline \multicolumn{1}{|c||}{Solvable} & $\mG_{19,9} \cong \mG_{19,7} \cong \mG_{19,5} \cong \mG_{19,3}$ & $\mG_{18,17} \cong \mG_{18,16}$ \\
\hline  $\mG_{19,8} \cong \mG_{19,6} \cong \mG_{19,2}$ & $\mG_{18,20} \cong \mG_{18,19} \cong \mG_{18,7}$  & $\mG_{18,3} \cong \mG_{18,1}$ \\
\hline $\mG_{18,14} \cong\mG_{18,13} \cong \mG_{18,11} \cong \mG_{18,9}$ & $\mG_{18,15} \cong \mG_{18,12} \cong \mG_{18,10} \cong \mG_{18,6}$ & $\mG_{17,17} \cong \mG_{17,13}$\\
\hline $\mG_{17,10}\cong\mG_{17,9} \cong \mG_{17,7} \cong \mG_{17,6}$ & $\mG_{18,22} \cong \mG_{18,4} \cong \mG_{16,12}$ & $\mG_{17,14} \cong \mG_{17,12}$ \\
\hline  $\mG_{17,8}(a) \cong \mG_{17,11}(4a)$ & $\mG_{16,10} \cong \mG_{16,8}$ & $\mG_{16,11} \cong \mG_{16,7}$\\
\hline  $\mG_{16,3}(a) \cong \mG_{16,5}(4a) \cong \mG_{16,4}(4a)$ & & \\
\hline
\end{tabular}
\end{center}
\medskip
\caption{Isomorphisms among the indecomposable $\Ga-$graded contractions.}\label{tab:iso}
\end{table}
Note that isomorphic graded contractions are always of the
same type, i.e. all discrete or all continuous.
Among the resulting non-isomorphic indecomposable algebras, there are $28$
nilpotent, one of them being parametric, 17 solvable with 5 of them parametric and 4 non--solvable. 
These indecomposable Lie algebras are
listed together with their invariant characteristics in Appendix A.2.
All resulting decomposable Lie algebras can be written as their
direct sums.

Including two trivial contractions we have obtained 55
non--isomorphic contracted Lie algebras as the graded contractions
of the Gell--Mann graded Lie algebra $\spl(3,\bC)$. Among them there
are 4 one--parametric and 2 two--parametric families of Lie
algebras. From all these contracted Lie algebras 20 are discrete
contractions, 32 continuous contractions and 3 parametric algebras
represent continuous contractions for a special value of their
parameter; otherwise they are discrete.
Table \ref{tab:gellmann} provides the overview of the numbers of
contracted Lie algebras for the Gell--Mann graded $\spl(3,\bC)$. Lie
algebras are divided there according to the dimension of their
non--abelian parts and their types.

\begin{table}
 \center
\renewcommand{\arraystretch}{1.2}
\begin{tabular}{|c||c|c|c|c|c|c||c|}
\hline Dimension of& \multicolumn{2}{c|}{Solvable} &
\multicolumn{2}{c|}{Nilpotent} & \multicolumn{2}{c||}{Non--solvable} & Total \\
\cline{2-7} non--abelian part & Indec. & Dec. & Indec. & Dec. &
Indec. & Dec. &
\\
\hline
\hline  3 & 1 &  & 1 & & 1 & & 3\\
\hline  4 &  &  &  &  & & & \\
\hline  5 & 2 &  & 2 &  & & & 4\\
\hline  6 & 2 & 2 & 3 & 1 & 1 & 1 & 10\\
\hline  7 & 6 &  & 10 &  & & & 16\\
\hline  8 & 6 &  & 12 &  & 2 & & 20\\
\hline \hline & & & & & & & 53\\
\hline
\end{tabular}
\medskip
\caption{The numbers of the non--trivial graded contractions of the
Gell-Mann graded $\spl(3,\bC)$.}\label{tab:gellmann}
\end{table}

It remains to compare our list of the graded contractions of the Gell-Mann graded $\spl(3,\bC)$ with the previous works which contain results for the Cartan grading \cite{PC4} and the Pauli grading \cite{HN1} of $\spl(3,\bC)$. 
Since there is a common coarsening for the Cartan and the Gell-Mann
gradings, it is expectable that there will be also common
Lie algebras among the corresponding non--trivial
contractions. There is no common coarsening, however, for the Pauli and the  Gell-Mann grading. Therefore, not all common results are the consequence of the existence of a common coarsening.
The contracted Lie algebras $\mG_{19,2},\   \mG_{15,1}(a,b),\ \mG_{11,2}$ and $\mG_{18,6}$ appear as graded contractions of the Cartan grading, the algebras $\mG_{15,2}({\textstyle\frac{-1+\sqrt{3}i}{8},\frac{-1-\sqrt{3}i}{2}}),\ \mG_{19,4},\
\mG_{18,7},\ \mG_{17,13},\ \mG_{18,18},\ \mG_{17,5},\ \mG_{18,5},\  \mG_{15,6}({\textstyle\frac{-1+\sqrt{3}i}{2}})$ and $\mG_{16,2}$ appear as graded contractions
of the Pauli Grading and the algebras $\mG_{18,8},\ \mG_{6,1},\ \mG_{15,1}(1,1),\ \mG_{20,1},\mG_{17,18},\ \mG_{19,3},\  \mG_{18,16},$ $\mG_{16,12},$ $\mG_{18,23},\ \mG_{15,9}$  and $\mG_{15,5}$ appear as graded contractions
of both Cartan and Pauli gradings.

\section{Concluding remarks}

\begin{itemize}
\item The richness of the outcome of the graded contraction procedure and its crucial dependence on the initial grading is illustrated by the fact that $16$ solvable algebras, including two--parametric continuum, which are not among the results neither from Pauli nor Cartan gradings, are obtained. Moreover, $12$ nilpotent including one--parametric continuum and three non-solvable algebras are also new among the graded contractions of $\slc$. 
\vspace{-0pt}
\item  The physically important rigid rotor algebra $[\mathbb{R}^5]$so$(3)$, see \cite{Guise2} and references therein, is a real eight--dimensional algebra generated by five commuting quadrupole moments $Q_\nu,\,\nu=\pm2, \pm 1, 0$ 
and three angular moments $L_i, i=\pm 1, 0$. 
It is found among the results as a real form of the algebra $\mG_{6,2}$ with commutation relations listed in Appendix A.2, where $Q_{-2}=e_1,\, Q_{-1}=e_2, \dots, Q_{2}=e_5$ and $L_{-1}=e_6,\, L_{0}=e_7,\, L_{1}=e_8 $ are identified. 

\item It remains to apply the graded contraction procedure to the last of the four fine gradings of $\slc$, so called $\Ga_d$ grading \cite{HPPT1}. In contrast to Cartan, Pauli and Gell-Mann gradings, this finest
grading $\Ga_d$ is known for Lie algebra $\slc$ only ---
it decomposes $\slc$ into eight one--dimensional grading subspaces determined by the matrices
\begin{equation*}
\begin{array}{llllllll}
\left(\begin{smallmatrix}
        0&0&0\\
        0&1&0\\
        0&0&-1
        \end{smallmatrix}\right), &
\left(\begin{smallmatrix}
        0&1&0\\
        0&0&0\\
        -1&0&0
        \end{smallmatrix}\right), &
\left(\begin{smallmatrix}
        0&0&0\\
        0&0&1\\
        0&0&0
        \end{smallmatrix}\right), &
\left(\begin{smallmatrix}
        0&0&1\\
        1&0&0\\
        0&0&0\\
        \end{smallmatrix}\right),&
\left(\begin{smallmatrix}
        2&0&0\\
        0&-1&0\\
        0&0&-1
        \end{smallmatrix}\right), &
\left(\begin{smallmatrix}
        0&1&0\\
        0&0&0\\
        1&0&0
        \end{smallmatrix}\right), &
\left(\begin{smallmatrix}
        0&0&0\\
        0&0&0\\
        0&1&0
        \end{smallmatrix}\right), &
\left(\begin{smallmatrix}
        0&0&1\\
        -1&0&0\\
        0&0&0
        \end{smallmatrix}\right).
\end{array}
\end{equation*}
Only non-solvable contracted algebras of $\Ga_d$ grading are classified \cite{PN}. Since the symmetry group of $\Ga_d$ has only four elements and a vast number of parametric solutions appears, a complete classification of the outcome of the graded contraction procedure is still beyond reach. 
\item For the study of the four one--parametric families of Lie
algebras, the concept of generalized derivations \cite{HN5} is used and the corresponding invariant functions are calculated and tabulated in Appendix A.3. Even though this concept was extended to so called twisted cocycles in \cite{HN6} and is able to handle two--parametric continua of four--dimensional Lie algebras, the complexity of calculations in dimension eight still prevents reaching a classification of the two two--parametric continua in the present case. Thus, these two two--parametric algebras warrant further study.    
\item Even though the possibility of existence of a three--term contraction equation is brought forward in \cite{HN1}, the solution $\ev^{9,1}$ is the first solution to appear in the literature which would be lost if only two--term equations were considered. A detailed comparison of the results of the two simplifying approaches \eqref{extend} and \eqref{ropa}, which both use two--term equations only, is located in Appendix A.1. 
\item The $33$ non--trivial IW contractions form a significant part among the solutions of the graded contraction procedure and are distinguished among the contraction matrices in Appendix A.1, as well as among the contracted algebras in Appendix A2. Thus, among a few others, this procedure is a very effective tool for detecting and explicit formulation of this physically important kind of relation among Lie algebras.
\end{itemize}
\section*{Acknowledgments}
The authors gratefully acknowledge the support of this work by RVO68407700 and by the Ministry
of Education of Czech Republic from the project MSM6840770039. 

\section*{Appendix A.1: Contraction matrices}

Non--equivalent solutions
of contraction system for the Gell--Mann graded Lie algebra $\slc$ are listed. 
These solutions are divided according to
the number of zeros among the relevant contraction parameters. The
solution $\ev^{i,j}$ refers to the $j$--th solution in the relevant
list of solutions with $i$ zeros. If not specified, the parameters $a,b$ in contraction
matrices are arbitrary non--zero complex numbers; zeros in
contraction matrices are shown as dots. The subscript $C$ or
$D$ denotes continuous or discrete solution, respectively. The parametric solutions which satisfy \eqref{extend} only if all their parameters are equal to $1$ are marked by $V$ or $\overline{V}$. Non--parametric solutions which do not satisfy \eqref{extend} are marked by $W$ or $\overline{W}$. The parametric solutions which satisfy \eqref{ropa} only if all their parameters are equal to $1$ are marked by $\overline{V}$. Non--parametric solutions which do not satisfy \eqref{ropa} are marked by $\overline{W}$.

\begin{itemize}
\item[$\bullet$\hspace{-10pt}]\hspace{4pt} Trivial solutions
$\ev^{0,1}, \ev^{21,1}$
$$
\left(
\begin{smallmatrix}
\p & 1 & 1 & 1 & 1 & 1 & 1 \\
1 & \p & 1 & 1 & 1 & 1 & 1 \\
1 & 1 & \p & 1 & 1 & 1 & 1 \\
1 & 1 & 1 & \p & 1 & 1 & 1 \\
1 & 1 & 1 & 1 & \p & 1 & 1 \\
1 & 1 & 1 & 1 & 1 & \p & 1 \\
1 & 1 & 1 & 1 & 1 & 1 & \p \\
\end{smallmatrix}
\right)_{\hspace{-5pt} C} \left(
\begin{smallmatrix}
\p & \p & \p & \p & \p & \p & \p \\
\p & \p & \p & \p & \p & \p & \p \\
\p & \p & \p & \p & \p & \p & \p \\
\p & \p & \p & \p & \p & \p & \p \\
\p & \p & \p & \p & \p & \p & \p \\
\p & \p & \p & \p & \p & \p & \p \\
\p & \p & \p & \p & \p & \p & \p \\
\end{smallmatrix}
\right)_{\hspace{-5pt} C}
$$

\item[$\bullet$\hspace{-10pt}]\hspace{4pt} Solutions with 6 zeros
$\ev^{6,1}, \ev^{6,2}$
$$
\left(
\begin{smallmatrix}
\p & 1 & 1 & 1 & 1 & 1 & 1 \\
1 & \p & 1 & 1 & 1 & 1 & 1 \\
1 & 1 & \p & \p & 1 & \p & \p \\
1 & 1 & \p & \p & 1 & \p & \p \\
1 & 1 & 1 & 1 & \p & 1 & 1 \\
1 & 1 & \p & \p & 1 & \p & \p \\
1 & 1 & \p & \p & 1 & \p & \p \\
\end{smallmatrix}
\right)_{\hspace{-5pt} C}\left(
\begin{smallmatrix}
\p & \p & 1 & 1 & 1 & \p & \p \\
\p & \p & 1 & 1 & 1 & \p & \p \\
1 & 1 & \p & 1 & 1 & 1 & 1 \\
1 & 1 & 1 & \p & 1 & 1 & 1 \\
1 & 1 & 1 & 1 & \p & 1 & 1 \\
\p & \p & 1 & 1 & 1 & \p & \p \\
\p & \p & 1 & 1 & 1 & \p & \p \\
\end{smallmatrix}
\right)_{\hspace{-5pt} C}
$$

\item[$\bullet$\hspace{-10pt}]\hspace{4pt} Solution with 9 zeros
$\ev^{9,1}$
$$
\left(
\begin{smallmatrix}
\p & \p & \p & \p & \p & \p & \p \\
\p & \p & -1 & 1 & \p & -1 & 1 \\
\p & -1 & \p & 1 & 1 & -1 & \p \\
\p & 1 & 1 & \p & 1 & \p & 1 \\
\p & \p & 1 & 1 & \p & 1 & 1 \\
\p & -1 & -1 & \p & 1 & \p & 1 \\
\p & 1 & \p & 1 & 1 & 1 & \p \\
\end{smallmatrix}
\right)_{\hspace{-5pt} D}^{\hspace{-5pt} \overline{W}}
$$

\item[$\bullet$\hspace{-10pt}]\hspace{4pt} Solutions with 11 zeros
$\ev^{11,1}, \ev^{11,2}, \ev^{11,3}$
$$
\left(
\begin{smallmatrix}
\p & 1 & 1 & \p & \p & 1 & \p \\
1 & \p & 1 & 1 & 1 & 1 & 1 \\
1 & 1 & \p & \p & 1 & \p & \p \\
\p & 1 & \p & \p & \p & \p & \p \\
\p & 1 & 1 & \p & \p & 1 & \p \\
1 & 1 & \p & \p & 1 & \p & \p \\
\p & 1 & \p & \p & \p & \p & \p \\
\end{smallmatrix}
\right)_{\hspace{-5pt} C} \left(
\begin{smallmatrix}
\p & 1 & 1 & 1 & 1 & 1 & 1 \\
1 & \p & \p & 1 & \p & 1 & \p \\
1 & \p & \p & \p & \p & \p & \p \\
1 & 1 & \p & \p & 1 & \p & \p \\
1 & \p & \p & 1 & \p & 1 & \p \\
1 & 1 & \p & \p & 1 & \p & \p \\
1 & \p & \p & \p & \p & \p & \p \\
\end{smallmatrix}
\right)_{\hspace{-5pt} C} \left(
\begin{smallmatrix}
\p & \p & 1 & \p & \p & \p & \p \\
\p & \p & 1 & 1 & 1 & \p & \p \\
1 & 1 & \p & 1 & 1 & 1 & 1 \\
\p & 1 & 1 & \p & \p & 1 & \p \\
\p & 1 & 1 & \p & \p & 1 & \p \\
\p & \p & 1 & 1 & 1 & \p & \p \\
\p & \p & 1 & \p & \p & \p & \p \\
\end{smallmatrix}
\right)_{\hspace{-5pt} C}
$$

\item[$\bullet$\hspace{-10pt}]\hspace{4pt} Solutions with 12 zeros
$\ev^{12,1}, \ev^{12,2}$
$$
\left(
\begin{smallmatrix}
\p & \p & \p & \p & \p & \p & \p \\
\p & \p & -1 & 1 & 1 & -1 & 1 \\
\p & -1 & \p & \p & 1 & \p & \p \\
\p & 1 & \p & \p & 1 & \p & \p \\
\p & 1 & 1 & 1 & \p & 1 & 1 \\
\p & -1 & \p & \p & 1 & \p & \p \\
\p & 1 & \p & \p & 1 & \p & \p \\
\end{smallmatrix}
\right)_{\hspace{-5pt} D}^{\hspace{-5pt} \overline{W}}\left(
\begin{smallmatrix}
\p & \p & \p & \p & \p & \p & \p \\
\p & \p & -1 & -1 & \p & \p & \p \\
\p & -1 & \p & 1 & -1 & 1 & \p \\
\p & -1 & 1 & \p & -1 & \p & 1 \\
\p & \p & -1 & -1 & \p & 1 & 1 \\
\p & \p & 1 & \p & 1 & \p & \p \\
\p & \p & \p & 1 & 1 & \p & \p \\
\end{smallmatrix}
\right)_{\hspace{-5pt} D}^{\hspace{-5pt} \overline{W}}
$$

\item[$\bullet$\hspace{-10pt}]\hspace{4pt} Solution with 13 zeros
$\ev^{13,1}$
$$
\left(
\begin{smallmatrix}
\p & \p & \p & \p & \p & \p & \p \\
\p & \p & -1 & 1 & \p & -1 & 1 \\
\p & -1 & \p & \p & 1 & \p & \p \\
\p & 1 & \p & \p & 1 & \p & \p \\
\p & \p & 1 & 1 & \p & 1 & 1 \\
\p & -1 & \p & \p & 1 & \p & \p \\
\p & 1 & \p & \p & 1 & \p & \p \\
\end{smallmatrix}
\right)_{\hspace{-5pt} D}^{\hspace{-5pt} \overline{W}}
$$

\item[$\bullet$\hspace{-10pt}]\hspace{4pt} Solutions with 14 zeros
$\ev^{14,1}, \ev^{14,2}$
$$
\left(
\begin{smallmatrix}
\p & \p & \p & \p & \p & \p & \p \\
\p & \p & -1 & 1 & 1 & -1 & 1 \\
\p & -1 & \p & \p & 1 & \p & \p \\
\p & 1 & \p & \p & \p & \p & \p \\
\p & 1 & 1 & \p & \p & 1 & \p \\
\p & -1 & \p & \p & 1 & \p & \p \\
\p & 1 & \p & \p & \p & \p & \p \\
\end{smallmatrix}
\right)_{\hspace{-5pt} D}^{\hspace{-5pt} \overline{W}}\left(
\begin{smallmatrix}
\p & \p & 1 & \p & \p & \p & \p \\
\p & \p & 1 & -1 & \p & \p & \p \\
1 & 1 & \p & 1 & 1 & 1 & \p \\
\p & -1 & 1 & \p & \p & \p & \p \\
\p & \p & 1 & \p & \p & 1 & \p \\
\p & \p & 1 & \p & 1 & \p & \p \\
\p & \p & \p & \p & \p & \p & \p \\
\end{smallmatrix}
\right)_{\hspace{-5pt} D}^{\hspace{-5pt} \overline{W}}
$$

\item[$\bullet$\hspace{-10pt}]\hspace{4pt} Solutions with 15 zeros
$\ev^{15,1},\ldots, \ev^{15,9}$
$$
\left(
\begin{smallmatrix}
\p & a & b & 1 & 1 & 1 & 1 \\
a & \p & \p & \p & \p & \p & \p \\
b & \p & \p & \p & \p & \p & \p \\
1 & \p & \p & \p & \p & \p & \p \\
1 & \p & \p & \p & \p & \p & \p \\
1 & \p & \p & \p & \p & \p & \p \\
1 & \p & \p & \p & \p & \p & \p \\
\end{smallmatrix}
\right)_{\hspace{-5pt} \ast}^{\hspace{-5pt} V}\left(
\begin{smallmatrix}
\p & a & \p & \p & \p & \p & \p \\
a & \p & b & 1 & 1 & 1 & 1 \\
\p & b & \p & \p & \p & \p & \p \\
\p & 1 & \p & \p & \p & \p & \p \\
\p & 1 & \p & \p & \p & \p & \p \\
\p & 1 & \p & \p & \p & \p & \p \\
\p & 1 & \p & \p & \p & \p & \p \\
\end{smallmatrix}
\right)_{\hspace{-5pt} \ast}^{\hspace{-5pt} V}\left(
\begin{smallmatrix}
\p & 1 & 1 & 1 & \p & \p & \p \\
1 & \p & 1 & 1 & \p & \p & \p \\
1 & 1 & \p & 1 & \p & \p & \p \\
1 & 1 & 1 & \p & \p & \p & \p \\
\p & \p & \p & \p & \p & \p & \p \\
\p & \p & \p & \p & \p & \p & \p \\
\p & \p & \p & \p & \p & \p & \p \\
\end{smallmatrix}
\right)_{\hspace{-5pt} C}\left(
\begin{smallmatrix}
\p & 1 & 1 & \p & \p & 1 & \p \\
1 & \p & 1 & \p & \p & \p & 1 \\
1 & 1 & \p & \p & 1 & \p & \p \\
\p & \p & \p & \p & \p & \p & \p \\
\p & \p & 1 & \p & \p & \p & \p \\
1 & \p & \p & \p & \p & \p & \p \\
\p & 1 & \p & \p & \p & \p & \p \\
\end{smallmatrix}
\right)_{\hspace{-5pt} C}\left(
\begin{smallmatrix}
\p & \p & \p & \p & \p & \p & \p \\
\p & \p & 1 & 1 & 1 & \p & \p \\
\p & 1 & \p & 1 & \p & \p & 1 \\
\p & 1 & 1 & \p & \p & 1 & \p \\
\p & 1 & \p & \p & \p & \p & \p \\
\p & \p & \p & 1 & \p & \p & \p \\
\p & \p & 1 & \p & \p & \p & \p \\
\end{smallmatrix}
\right)_{\hspace{-5pt} C}
$$
$$
\left(
\begin{smallmatrix}
\p & \p & \frac{1}{2}(a + 1) & \p & \p & \p & \p \\
\p & \p & a & 1 & 1 & \p & \p \\
\frac{1}{2}(a+1) & a & \p & \p & 1 & \p & \p \\
\p & 1 & \p & \p & \p & \p & \p \\
\p & 1 & 1 & \p & \p & 1 & \p \\
\p & \p & \p & \p & 1 & \p & \p \\
\p & \p & \p & \p & \p & \p & \p \\
\end{smallmatrix}
\right)_{\hspace{-5pt} \dag}^{\hspace{-5pt} \overline{V}}\left(
\begin{smallmatrix}
\p & \p & \p & \p & \p & \p & \p \\
\p & \p & -1 & 1 & \p & -1 & 1 \\
\p & -1 & \p & \p & 1 & \p & \p \\
\p & 1 & \p & \p & \p & \p & \p \\
\p & \p & 1 & \p & \p & 1 & \p \\
\p & -1 & \p & \p & 1 & \p & \p \\
\p & 1 & \p & \p & \p & \p & \p \\
\end{smallmatrix}
\right)_{\hspace{-5pt} D}^{\hspace{-5pt} \overline{W}}\left(
\begin{smallmatrix}
\p & \p & \p & \p & \p & \p & \p \\
\p & \p & 1 & -1 & \p & \p & \p \\
\p & 1 & \p & 1 & 1 & 1 & \p \\
\p & -1 & 1 & \p & \p & \p & \p \\
\p & \p & 1 & \p & \p & 1 & \p \\
\p & \p & 1 & \p & 1 & \p & \p \\
\p & \p & \p & \p & \p & \p & \p \\
\end{smallmatrix}
\right)_{\hspace{-5pt} D}^{\hspace{-5pt} \overline{W}} \left(
\begin{smallmatrix}
\p & \p & \p & \p & \p & \p & \p \\
\p & \p & \p & 1 & 1 & 1 & \p \\
\p & \p & \p & \p & \p & \p & \p \\
\p & 1 & \p & \p & 1 & 1 & \p \\
\p & 1 & \p & 1 & \p & 1 & \p \\
\p & 1 & \p & 1 & 1 & \p & \p \\
\p & \p & \p & \p & \p & \p & \p \\
\end{smallmatrix}
\right)_{\hspace{-5pt} C}
$$
{\small $^\ast$ Continuous for $a=b=1$, otherwise discrete. }
{\small $^\dag$ $a\neq 0,-1$, continuous for $a=1$, otherwise
discrete. }

\item[$\bullet$\hspace{-10pt}]\hspace{4pt} Solutions with 16 zeros
$\ev^{16,1},\ldots, \ev^{16,12}$
$$
\left(
\begin{smallmatrix}
\p & a & 1 & 1 & 1 & 1 & \p \\
a & \p & \p & \p & \p & \p & \p \\
1 & \p & \p & \p & \p & \p & \p \\
1 & \p & \p & \p & \p & \p & \p \\
1 & \p & \p & \p & \p & \p & \p \\
1 & \p & \p & \p & \p & \p & \p \\
\p & \p & \p & \p & \p & \p & \p \\
\end{smallmatrix}
\right)_{\hspace{-5pt} D}^{\hspace{-5pt} W} \left(
\begin{smallmatrix}
\p & 1 & 1 & 1 & \p & \p & \p \\
1 & \p & 1 & 1 & \p & \p & \p \\
1 & 1 & \p & \p & \p & \p & \p \\
1 & 1 & \p & \p & \p & \p & \p \\
\p & \p & \p & \p & \p & \p & \p \\
\p & \p & \p & \p & \p & \p & \p \\
\p & \p & \p & \p & \p & \p & \p \\
\end{smallmatrix}
\right)_{\hspace{-5pt} C}\left(
\begin{smallmatrix}
\p & a & \p & \p & \p & \p & \p \\
a & \p & 1 & 1 & 1 & 1 & \p \\
\p & 1 & \p & \p & \p & \p & \p \\
\p & 1 & \p & \p & \p & \p & \p \\
\p & 1 & \p & \p & \p & \p & \p \\
\p & 1 & \p & \p & \p & \p & \p \\
\p & \p & \p & \p & \p & \p & \p \\
\end{smallmatrix}
\right)_{\hspace{-5pt} D}^{\hspace{-5pt} W}\left(
\begin{smallmatrix}
\p & 1 & \p & \p & \p & \p & \p \\
1 & \p & a & 1 & \p & 1 & 1 \\
\p & a & \p & \p & \p & \p & \p \\
\p & 1 & \p & \p & \p & \p & \p \\
\p & \p & \p & \p & \p & \p & \p \\
\p & 1 & \p & \p & \p & \p & \p \\
\p & 1 & \p & \p & \p & \p & \p \\
\end{smallmatrix}
\right)_{\hspace{-5pt} D}^{\hspace{-5pt} W}\left(
\begin{smallmatrix}
\p & \p & \p & \p & \p & \p & \p \\
\p & \p & a & 1 & 1 & 1 & 1 \\
\p & a & \p & \p & \p & \p & \p \\
\p & 1 & \p & \p & \p & \p & \p \\
\p & 1 & \p & \p & \p & \p & \p \\
\p & 1 & \p & \p & \p & \p & \p \\
\p & 1 & \p & \p & \p & \p & \p \\
\end{smallmatrix}
\right)_{\hspace{-5pt} D}^{\hspace{-5pt} W}
$$
$$
\left(
\begin{smallmatrix}
\p & 1 & 1 & \p & \p & \p & \p \\
1 & \p & 1 & 1 & \p & \p & \p \\
1 & 1 & \p & 1 & \p & \p & \p \\
\p & 1 & 1 & \p & \p & \p & \p \\
\p & \p & \p & \p & \p & \p & \p \\
\p & \p & \p & \p & \p & \p & \p \\
\p & \p & \p & \p & \p & \p & \p \\
\end{smallmatrix}
\right)_{\hspace{-5pt} C} \left(
\begin{smallmatrix}
\p & \p & \frac{1}{2} & \p & \p & \p & \p \\
\p & \p & 1 & 1 & 1 & \p & \p \\
\frac {1}{2} & 1 & \p & \p & 1 & \p & \p \\
\p & 1 & \p & \p & \p & \p & \p \\
\p & 1 & 1 & \p & \p & \p & \p \\
\p & \p & \p & \p & \p & \p & \p \\
\p & \p & \p & \p & \p & \p & \p \\
\end{smallmatrix}
\right)_{\hspace{-5pt} D}^{\hspace{-5pt} \overline{W}}\left(
\begin{smallmatrix}
\p & \p & \p & \p & \p & \frac{1}{2}  & \p \\
\p & \p & \p & \p & 1 & 1 & 1 \\
\p & \p & \p & \p & 1 & \p & \p \\
\p & \p & \p & \p & \p & \p & \p \\
\p & 1 & 1 & \p & \p & \p & \p \\
\frac{1}{2} & 1 & \p & \p & \p & \p & \p \\
\p & 1 & \p & \p & \p & \p & \p \\
\end{smallmatrix}
\right)_{\hspace{-5pt} D}^{\hspace{-5pt} \overline{W}}\left(
\begin{smallmatrix}
\p & \p & \p & \p & \p & \p & \p \\
\p & \p & 1 & \p & 1 & \p & 1 \\
\p & 1 & \p & \p & 1 & \p & 1 \\
\p & \p & \p & \p & \p & \p & \p \\
\p & 1 & 1 & \p & \p & \p & \p \\
\p & \p & \p & \p & \p & \p & \p \\
\p & 1 & 1 & \p & \p & \p & \p \\
\end{smallmatrix}
\right)_{\hspace{-5pt} C}\left(
\begin{smallmatrix}
\p & \p & 1 & \p & \p & \p & \p \\
\p & \p & -1 & 1 & \p & \p & \p \\
1 & -1 & \p & \p & 1 & \p & \p \\
\p & 1 & \p & \p & \p & \p & \p \\
\p & \p & 1 & \p & \p & 1 & \p \\
\p & \p & \p & \p & 1 & \p & \p \\
\p & \p & \p & \p & \p & \p & \p \\
\end{smallmatrix}
\right)_{\hspace{-5pt} D}^{\hspace{-5pt} \overline{W}}
$$
$$
\left(
\begin{smallmatrix}
\p & \p & \p & \p & \p & \p & \p \\
\p & \p & -1 & 1 & 1 & \p & \p \\
\p & -1 & \p & \p & 1 & \p & \p \\
\p & 1 & \p & \p & \p & \p & \p \\
\p & 1 & 1 & \p & \p & 1 & \p \\
\p & \p & \p & \p & 1 & \p & \p \\
\p & \p & \p & \p & \p & \p & \p \\
\end{smallmatrix}
\right)_{\hspace{-5pt} D}^{\hspace{-5pt} \overline{W}}\left(
\begin{smallmatrix}
\p & \p & \p & \p & \p & \p & \p \\
\p & \p & \p & 1 & 1 & 1 & \p \\
\p & \p & \p & \p & \p & \p & \p \\
\p & 1 & \p & \p & 1 & \p & \p \\
\p & 1 & \p & 1 & \p & 1 & \p \\
\p & 1 & \p & \p & 1 & \p & \p \\
\p & \p & \p & \p & \p & \p & \p \\
\end{smallmatrix}
\right)_{\hspace{-5pt} C}
$$

\item[$\bullet$\hspace{-10pt}]\hspace{4pt} Solutions with 17 zeros
$\ev^{17,1},\ldots, \ev^{17,18}$
$$
\left(
\begin{smallmatrix}
\p & 1 & 1 & 1 & 1 & \p & \p \\
1 & \p & \p & \p & \p & \p & \p \\
1 & \p & \p & \p & \p & \p & \p \\
1 & \p & \p & \p & \p & \p & \p \\
1 & \p & \p & \p & \p & \p & \p \\
\p & \p & \p & \p & \p & \p & \p \\
\p & \p & \p & \p & \p & \p & \p \\
\end{smallmatrix}
\right)_{\hspace{-5pt} D}^{\hspace{-5pt} W}\left(
\begin{smallmatrix}
\p & a & \p & 1 & 1 & 1 & \p \\
a & \p & \p & \p & \p & \p & \p \\
\p & \p & \p & \p & \p & \p & \p \\
1 & \p & \p & \p & \p & \p & \p \\
1 & \p & \p & \p & \p & \p & \p \\
1 & \p & \p & \p & \p & \p & \p \\
\p & \p & \p & \p & \p & \p & \p \\
\end{smallmatrix}
\right)_{\hspace{-5pt} D}^{\hspace{-5pt} V}\left(
\begin{smallmatrix}
\p & 1 & 1 & 1 & \p & \p & \p \\
1 & \p & 1 & \p & \p & \p & \p \\
1 & 1 & \p & \p & \p & \p & \p \\
1 & \p & \p & \p & \p & \p & \p \\
\p & \p & \p & \p & \p & \p & \p \\
\p & \p & \p & \p & \p & \p & \p \\
\p & \p & \p & \p & \p & \p & \p \\
\end{smallmatrix}
\right)_{\hspace{-5pt} C}\left(
\begin{smallmatrix}
\p & 1 & 1 & \p & \p & \p & \p \\
1 & \p & 1 & 1 & \p & \p & \p \\
1 & 1 & \p & \p & \p & \p & \p \\
\p & 1 & \p & \p & \p & \p & \p \\
\p & \p & \p & \p & \p & \p & \p \\
\p & \p & \p & \p & \p & \p & \p \\
\p & \p & \p & \p & \p & \p & \p \\
\end{smallmatrix}
\right)_{\hspace{-5pt} C}\left(
\begin{smallmatrix}
\p & \p & 1 & 1 & \p & \p & \p \\
\p & \p & 1 & 1 & \p & \p & \p \\
1 & 1 & \p & \p & \p & \p & \p \\
1 & 1 & \p & \p & \p & \p & \p \\
\p & \p & \p & \p & \p & \p & \p \\
\p & \p & \p & \p & \p & \p & \p \\
\p & \p & \p & \p & \p & \p & \p \\
\end{smallmatrix}
\right)_{\hspace{-5pt} C}\left(
\begin{smallmatrix}
\p & 1 & \p & \p & \p & \p & \p \\
1 & \p & 1 & 1 & 1 & \p & \p \\
\p & 1 & \p & \p & \p & \p & \p \\
\p & 1 & \p & \p & \p & \p & \p \\
\p & 1 & \p & \p & \p & \p & \p \\
\p & \p & \p & \p & \p & \p & \p \\
\p & \p & \p & \p & \p & \p & \p \\
\end{smallmatrix}
\right)_{\hspace{-5pt} D}^{\hspace{-5pt} W}
$$
$$
\left(
\begin{smallmatrix}
\p & 1 & \p & \p & \p & \p & \p \\
1 & \p & 1 & 1 & \p & 1 & \p \\
\p & 1 & \p & \p & \p & \p & \p \\
\p & 1 & \p & \p & \p & \p & \p \\
\p & \p & \p & \p & \p & \p & \p \\
\p & 1 & \p & \p & \p & \p & \p \\
\p & \p & \p & \p & \p & \p & \p \\
\end{smallmatrix}
\right)_{\hspace{-5pt} D}^{\hspace{-5pt} W} \left(
\begin{smallmatrix}
\p & a & \p & \p & \p & \p & \p \\
a & \p & 1 & \p & 1 & 1 & \p \\
\p & 1 & \p & \p & \p & \p & \p \\
\p & \p & \p & \p & \p & \p & \p \\
\p & 1 & \p & \p & \p & \p & \p \\
\p & 1 & \p & \p & \p & \p & \p \\
\p & \p & \p & \p & \p & \p & \p \\
\end{smallmatrix}
\right)_{\hspace{-5pt} D}^{\hspace{-5pt} V}\left(
\begin{smallmatrix}
\p & 1 & \p & \p & \p & \p & \p \\
1 & \p & \p & 1 & 1 & 1 & \p \\
\p & \p & \p & \p & \p & \p & \p \\
\p & 1 & \p & \p & \p & \p & \p \\
\p & 1 & \p & \p & \p & \p & \p \\
\p & 1 & \p & \p & \p & \p & \p \\
\p & \p & \p & \p & \p & \p & \p \\
\end{smallmatrix}
\right)_{\hspace{-5pt} D}^{\hspace{-5pt} W}\left(
\begin{smallmatrix}
\p & \p & \p & \p & \p & \p & \p \\
\p & \p & 1 & 1 & 1 & 1 & \p \\
\p & 1 & \p & \p & \p & \p & \p \\
\p & 1 & \p & \p & \p & \p & \p \\
\p & 1 & \p & \p & \p & \p & \p \\
\p & 1 & \p & \p & \p & \p & \p \\
\p & \p & \p & \p & \p & \p & \p \\
\end{smallmatrix}
\right)_{\hspace{-5pt} D}^{\hspace{-5pt} W}\left(
\begin{smallmatrix}
\p & \p & \p & \p & \p & \p & \p \\
\p & \p & a & 1 & \p & 1 & 1 \\
\p & a & \p & \p & \p & \p & \p \\
\p & 1 & \p & \p & \p & \p & \p \\
\p & \p & \p & \p & \p & \p & \p \\
\p & 1 & \p & \p & \p & \p & \p \\
\p & 1 & \p & \p & \p & \p & \p \\
\end{smallmatrix}
\right)_{\hspace{-5pt} D}^{\hspace{-5pt} V}\left(
\begin{smallmatrix}
\p & 1 & \p & \p & \p & \p & \p \\
1 & \p & 1 & 1 & \p & \p & \p \\
\p & 1 & \p & 1 & \p & \p & \p \\
\p & 1 & 1 & \p & \p & \p & \p \\
\p & \p & \p & \p & \p & \p & \p \\
\p & \p & \p & \p & \p & \p & \p \\
\p & \p & \p & \p & \p & \p & \p \\
\end{smallmatrix}
\right)_{\hspace{-5pt} C}
$$
$$
\left(
\begin{smallmatrix}
\p & \p & \frac{1}{2} & \p & \p & \p & \p \\
\p & \p & \p & 1 & 1 & \p & \p \\
\frac{1}{2} & \p & \p & \p & 1 & \p & \p \\
\p & 1 & \p & \p & \p & \p & \p \\
\p & 1 & 1 & \p & \p & \p & \p \\
\p & \p & \p & \p & \p & \p & \p \\
\p & \p & \p & \p & \p & \p & \p \\
\end{smallmatrix}
\right)_{\hspace{-5pt} D}^{\hspace{-5pt} \overline{W}}\left(
\begin{smallmatrix}
\p & \p & \p & \p & \p & \p & \p \\
\p & \p & 1 & \p & 1 & \p & 1 \\
\p & 1 & \p & \p & 1 & \p & \p \\
\p & \p & \p & \p & \p & \p & \p \\
\p & 1 & 1 & \p & \p & \p & \p \\
\p & \p & \p & \p & \p & \p & \p \\
\p & 1 & \p & \p & \p & \p & \p \\
\end{smallmatrix}
\right)_{\hspace{-5pt} C} \left(
\begin{smallmatrix}
\p & \p & \p & \p & \p & \p & \p \\
\p & \p & 1 & \p & 1 & \p & 1 \\
\p & 1 & \p & \p & \p & \p & 1 \\
\p & \p & \p & \p & \p & \p & \p \\
\p & 1 & \p & \p & \p & \p & \p \\
\p & \p & \p & \p & \p & \p & \p \\
\p & 1 & 1 & \p & \p & \p & \p \\
\end{smallmatrix}
\right)_{\hspace{-5pt} C}\left(
\begin{smallmatrix}
\p & \p & \p & \p & \p & \p & \p \\
\p & \p & \p & \p & 1 & \p & 1 \\
\p & \p & \p & \p & 1 & \p & 1 \\
\p & \p & \p & \p & \p & \p & \p \\
\p & 1 & 1 & \p & \p & \p & \p \\
\p & \p & \p & \p & \p & \p & \p \\
\p & 1 & 1 & \p & \p & \p & \p \\
\end{smallmatrix}
\right)_{\hspace{-5pt} C}\left(
\begin{smallmatrix}
\p & \p & \p & \p & \p & \p & \p \\
\p & \p & \operatorname{-1} & 1 & \p & \p & \p \\
\p & \operatorname{-1} & \p & \p & 1 & \p & \p \\
\p & 1 & \p & \p & \p & \p & \p \\
\p & \p & 1 & \p & \p & 1 & \p \\
\p & \p & \p & \p & 1 & \p & \p \\
\p & \p & \p & \p & \p & \p & \p \\
\end{smallmatrix}
\right)_{\hspace{-5pt} D}^{\hspace{-5pt} \overline{W}}\left(
\begin{smallmatrix}
\p & \p & \p & \p & \p & \p & \p \\
\p & \p & \p & 1 & \p & 1 & \p \\
\p & \p & \p & \p & \p & \p & \p \\
\p & 1 & \p & \p & 1 & \p & \p \\
\p & \p & \p & 1 & \p & 1 & \p \\
\p & 1 & \p & \p & 1 & \p & \p \\
\p & \p & \p & \p & \p & \p & \p \\
\end{smallmatrix}
\right)_{\hspace{-5pt} C}
$$

\item[$\bullet$\hspace{-10pt}]\hspace{4pt} Solutions with 18 zeros
$\ev^{18,1},\ldots, \ev^{18,23}$
$$
\left(
\begin{smallmatrix}
\p & 1 & 1 & 1 & \p & \p & \p \\
1 & \p & \p & \p & \p & \p & \p \\
1 & \p & \p & \p & \p & \p & \p \\
1 & \p & \p & \p & \p & \p & \p \\
\p & \p & \p & \p & \p & \p & \p \\
\p & \p & \p & \p & \p & \p & \p \\
\p & \p & \p & \p & \p & \p & \p \\
\end{smallmatrix}
\right)_{\hspace{-5pt} C}\left(
\begin{smallmatrix}
\p & 1 & 1 & \p & 1 & \p & \p \\
1 & \p & \p & \p & \p & \p & \p \\
1 & \p & \p & \p & \p & \p & \p \\
\p & \p & \p & \p & \p & \p & \p \\
1 & \p & \p & \p & \p & \p & \p \\
\p & \p & \p & \p & \p & \p & \p \\
\p & \p & \p & \p & \p & \p & \p \\
\end{smallmatrix}
\right)_{\hspace{-5pt} D}^{\hspace{-5pt} W}\left(
\begin{smallmatrix}
\p & \p & 1 & 1 & 1 & \p & \p \\
\p & \p & \p & \p & \p & \p & \p \\
1 & \p & \p & \p & \p & \p & \p \\
1 & \p & \p & \p & \p & \p & \p \\
1 & \p & \p & \p & \p & \p & \p \\
\p & \p & \p & \p & \p & \p & \p \\
\p & \p & \p & \p & \p & \p & \p \\
\end{smallmatrix}
\right)_{\hspace{-5pt} C}\left(
\begin{smallmatrix}
\p & 1 & 1 & \p & \p & \p & \p \\
1 & \p & 1 & \p & \p & \p & \p \\
1 & 1 & \p & \p & \p & \p & \p \\
\p & \p & \p & \p & \p & \p & \p \\
\p & \p & \p & \p & \p & \p & \p \\
\p & \p & \p & \p & \p & \p & \p \\
\p & \p & \p & \p & \p & \p & \p \\
\end{smallmatrix}
\right)_{\hspace{-5pt} C}\left(
\begin{smallmatrix}
\p & 1 & \p & 1 & \p & \p & \p \\
1 & \p & 1 & \p & \p & \p & \p \\
\p & 1 & \p & \p & \p & \p & \p \\
1 & \p & \p & \p & \p & \p & \p \\
\p & \p & \p & \p & \p & \p & \p \\
\p & \p & \p & \p & \p & \p & \p \\
\p & \p & \p & \p & \p & \p & \p \\
\end{smallmatrix}
\right)_{\hspace{-5pt} C}\left(
\begin{smallmatrix}
\p & 1 & \p & \p & \p & \p & \p \\
1 & \p & 1 & 1 & \p & \p & \p \\
\p & 1 & \p & \p & \p & \p & \p \\
\p & 1 & \p & \p & \p & \p & \p \\
\p & \p & \p & \p & \p & \p & \p \\
\p & \p & \p & \p & \p & \p & \p \\
\p & \p & \p & \p & \p & \p & \p \\
\end{smallmatrix}
\right)_{\hspace{-5pt} C}
$$
$$
\left(
\begin{smallmatrix}
\p & \p & 1 & \p & \p & \p & \p \\
\p & \p & 1 & 1 & \p & \p & \p \\
1 & 1 & \p & \p & \p & \p & \p \\
\p & 1 & \p & \p & \p & \p & \p \\
\p & \p & \p & \p & \p & \p & \p \\
\p & \p & \p & \p & \p & \p & \p \\
\p & \p & \p & \p & \p & \p & \p \\
\end{smallmatrix}
\right)_{\hspace{-5pt} C}\left(
\begin{smallmatrix}
\p & 1 & \p & \p & 1 & \p & \p \\
1 & \p & \p & \p & 1 & \p & \p \\
\p & \p & \p & \p & \p & \p & \p \\
\p & \p & \p & \p & \p & \p & \p \\
1 & 1 & \p & \p & \p & \p & \p \\
\p & \p & \p & \p & \p & \p & \p \\
\p & \p & \p & \p & \p & \p & \p \\
\end{smallmatrix}
\right)_{\hspace{-5pt} D}\left(
\begin{smallmatrix}
\p & 1 & \p & \p & \p & \p & \p \\
1 & \p & 1 & \p & 1 & \p & \p \\
\p & 1 & \p & \p & \p & \p & \p \\
\p & \p & \p & \p & \p & \p & \p \\
\p & 1 & \p & \p & \p & \p & \p \\
\p & \p & \p & \p & \p & \p & \p \\
\p & \p & \p & \p & \p & \p & \p \\
\end{smallmatrix}
\right)_{\hspace{-5pt} D}^{\hspace{-5pt} W}\left(
\begin{smallmatrix}
\p & \p & \p & \p & \p & \p & \p \\
\p & \p & 1 & 1 & 1 & \p & \p \\
\p & 1 & \p & \p & \p & \p & \p \\
\p & 1 & \p & \p & \p & \p & \p \\
\p & 1 & \p & \p & \p & \p & \p \\
\p & \p & \p & \p & \p & \p & \p \\
\p & \p & \p & \p & \p & \p & \p \\
\end{smallmatrix}
\right)_{\hspace{-5pt} C}\left(
\begin{smallmatrix}
\p & 1 & \p & \p & \p & \p & \p \\
1 & \p & 1 & \p & \p & 1 & \p \\
\p & 1 & \p & \p & \p & \p & \p \\
\p & \p & \p & \p & \p & \p & \p \\
\p & \p & \p & \p & \p & \p & \p \\
\p & 1 & \p & \p & \p & \p & \p \\
\p & \p & \p & \p & \p & \p & \p \\
\end{smallmatrix}
\right)_{\hspace{-5pt} D}^{\hspace{-5pt} W}\left(
\begin{smallmatrix}
\p & 1 & \p & \p & \p & \p & \p \\
1 & \p & \p & 1 & \p & 1 & \p \\
\p & \p & \p & \p & \p & \p & \p \\
\p & 1 & \p & \p & \p & \p & \p \\
\p & \p & \p & \p & \p & \p & \p \\
\p & 1 & \p & \p & \p & \p & \p \\
\p & \p & \p & \p & \p & \p & \p \\
\end{smallmatrix}
\right)_{\hspace{-5pt} C}
$$
$$
\left(
\begin{smallmatrix}
\p & \p & \p & \p & \p & \p & \p \\
\p & \p & 1 & 1 & \p & 1 & \p \\
\p & 1 & \p & \p & \p & \p & \p \\
\p & 1 & \p & \p & \p & \p & \p \\
\p & \p & \p & \p & \p & \p & \p \\
\p & 1 & \p & \p & \p & \p & \p \\
\p & \p & \p & \p & \p & \p & \p \\
\end{smallmatrix}
\right)_{\hspace{-5pt} D}^{\hspace{-5pt} W}\left(
\begin{smallmatrix}
\p & \p & \p & \p & \p & \p & \p \\
\p & \p & 1 & \p & 1 & 1 & \p \\
\p & 1 & \p & \p & \p & \p & \p \\
\p & \p & \p & \p & \p & \p & \p \\
\p & 1 & \p & \p & \p & \p & \p \\
\p & 1 & \p & \p & \p & \p & \p \\
\p & \p & \p & \p & \p & \p & \p \\
\end{smallmatrix}
\right)_{\hspace{-5pt} D}^{\hspace{-5pt} W}\left(
\begin{smallmatrix}
\p & \p & \p & \p & \p & \p & \p \\
\p & \p & \p & 1 & 1 & 1 & \p \\
\p & \p & \p & \p & \p & \p & \p \\
\p & 1 & \p & \p & \p & \p & \p \\
\p & 1 & \p & \p & \p & \p & \p \\
\p & 1 & \p & \p & \p & \p & \p \\
\p & \p & \p & \p & \p & \p & \p \\
\end{smallmatrix}
\right)_{\hspace{-5pt} C}\left(
\begin{smallmatrix}
\p & \p & \p & \p & \p & \p & \p \\
\p & \p & 1 & 1 & \p & \p & \p \\
\p & 1 & \p & 1 & \p & \p & \p \\
\p & 1 & 1 & \p & \p & \p & \p \\
\p & \p & \p & \p & \p & \p & \p \\
\p & \p & \p & \p & \p & \p & \p \\
\p & \p & \p & \p & \p & \p & \p
\end{smallmatrix}
\right)_{\hspace{-5pt} C}\left(
\begin{smallmatrix}
\p & \p & \p & \p & \p & \p & \p \\
\p & \p & 1 & \p & 1 & \p & \p \\
\p & 1 & \p & \p & 1 & \p & \p \\
\p & \p & \p & \p & \p & \p & \p \\
\p & 1 & 1 & \p & \p & \p & \p \\
\p & \p & \p & \p & \p & \p & \p \\
\p & \p & \p & \p & \p & \p & \p \\
\end{smallmatrix}
\right)_{\hspace{-5pt} C}\left(
\begin{smallmatrix}
\p & \p & \p & \p & \p & 1 & \p \\
\p & \p & \p & \p & \p & \p & 1 \\
\p & \p & \p & \p & 1 & \p & \p \\
\p & \p & \p & \p & \p & \p & \p \\
\p & \p & 1 & \p & \p & \p & \p \\
1 & \p & \p & \p & \p & \p & \p \\
\p & 1 & \p & \p & \p & \p & \p \\
\end{smallmatrix}
\right)_{\hspace{-5pt} C}
$$
$$
\left(
\begin{smallmatrix}
\p & \p & \p & \p & \p & \p & \p \\
\p & \p & 1 & \p & \p & \p & 1 \\
\p & 1 & \p & \p & 1 & \p & \p \\
\p & \p & \p & \p & \p & \p & \p \\
\p & \p & 1 & \p & \p & \p & \p \\
\p & \p & \p & \p & \p & \p & \p \\
\p & 1 & \p & \p & \p & \p & \p \\
\end{smallmatrix}
\right)_{\hspace{-5pt} C}\left(
\begin{smallmatrix}
\p & \p & \p & \p & \p & \p & \p \\
\p & \p & \p & \p & 1 & \p & 1 \\
\p & \p & \p & \p & 1 & \p & \p \\
\p & \p & \p & \p & \p & \p & \p \\
\p & 1 & 1 & \p & \p & \p & \p \\
\p & \p & \p & \p & \p & \p & \p \\
\p & 1 & \p & \p & \p & \p & \p \\
\end{smallmatrix}
\right)_{\hspace{-5pt} C}\left(
\begin{smallmatrix}
\p & \p & \p & \p & \p & \p & \p \\
\p & \p & 1 & \p & \p & 1 & \p \\
\p & 1 & \p & \p & \p & 1 & \p \\
\p & \p & \p & \p & \p & \p & \p \\
\p & \p & \p & \p & \p & \p & \p \\
\p & 1 & 1 & \p & \p & \p & \p \\
\p & \p & \p & \p & \p & \p & \p \\
\end{smallmatrix}
\right)_{\hspace{-5pt} D}\left(
\begin{smallmatrix}
\p & \p & \p & \p & \p & \p & \p \\
\p & \p & 1 & \p & 1 & \p & \p \\
\p & 1 & \p & \p & \p & \p & 1 \\
\p & \p & \p & \p & \p & \p & \p \\
\p & 1 & \p & \p & \p & \p & \p \\
\p & \p & \p & \p & \p & \p & \p \\
\p & \p & 1 & \p & \p & \p & \p \\
\end{smallmatrix}
\right)_{\hspace{-5pt} C}\left(
\begin{smallmatrix}
\p & \p & \p & \p & \p & \p & \p \\
\p & \p & \p & \p & 1 & \p & \p \\
\p & \p & \p & \p & \p & \p & 1 \\
\p & \p & \p & \p & \p & 1 & \p \\
\p & 1 & \p & \p & \p & \p & \p \\
\p & \p & \p & 1 & \p & \p & \p \\
\p & \p & 1 & \p & \p & \p & \p \\
\end{smallmatrix}
\right)_{\hspace{-5pt} C}
$$

\item[$\bullet$\hspace{-10pt}]\hspace{4pt} Solutions with 19 zeros
$\ev^{19,1},\ldots, \ev^{19,11}$
$$
\left(
\begin{smallmatrix}
\p & 1 & 1 & \p & \p & \p & \p \\
1 & \p & \p & \p & \p & \p & \p \\
1 & \p & \p & \p & \p & \p & \p \\
\p & \p & \p & \p & \p & \p & \p \\
\p & \p & \p & \p & \p & \p & \p \\
\p & \p & \p & \p & \p & \p & \p \\
\p & \p & \p & \p & \p & \p & \p \\
\end{smallmatrix}
\right)_{\hspace{-5pt} C}\left(
\begin{smallmatrix}
\p & 1 & \p & \p & 1 & \p & \p \\
1 & \p & \p & \p & \p & \p & \p \\
\p & \p & \p & \p & \p & \p & \p \\
\p & \p & \p & \p & \p & \p & \p \\
1 & \p & \p & \p & \p & \p & \p \\
\p & \p & \p & \p & \p & \p & \p \\
\p & \p & \p & \p & \p & \p & \p \\
\end{smallmatrix}
\right)_{\hspace{-5pt} D}\left(
\begin{smallmatrix}
\p & 1 & \p & \p & \p & \p & \p \\
1 & \p & 1 & \p & \p & \p & \p \\
\p & 1 & \p & \p & \p & \p & \p \\
\p & \p & \p & \p & \p & \p & \p \\
\p & \p & \p & \p & \p & \p & \p \\
\p & \p & \p & \p & \p & \p & \p \\
\p & \p & \p & \p & \p & \p & \p \\
\end{smallmatrix}
\right)_{\hspace{-5pt} C}\left(
\begin{smallmatrix}
\p & \p & \p & 1 & \p & \p & \p \\
\p & \p & 1 & \p & \p & \p & \p \\
\p & 1 & \p & \p & \p & \p & \p \\
1 & \p & \p & \p & \p & \p & \p \\
\p & \p & \p & \p & \p & \p & \p \\
\p & \p & \p & \p & \p & \p & \p \\
\p & \p & \p & \p & \p & \p & \p \\
\end{smallmatrix}
\right)_{\hspace{-5pt} C}\left(
\begin{smallmatrix}
\p & \p & \p & \p & \p & \p & \p \\
\p & \p & 1 & 1 & \p & \p & \p \\
\p & 1 & \p & \p & \p & \p & \p \\
\p & 1 & \p & \p & \p & \p & \p \\
\p & \p & \p & \p & \p & \p & \p \\
\p & \p & \p & \p & \p & \p & \p \\
\p & \p & \p & \p & \p & \p & \p \\
\end{smallmatrix}
\right)_{\hspace{-5pt} C}\left(
\begin{smallmatrix}
\p & 1 & \p & \p & \p & \p & \p \\
1 & \p & \p & \p & 1 & \p & \p \\
\p & \p & \p & \p & \p & \p & \p \\
\p & \p & \p & \p & \p & \p & \p \\
\p & 1 & \p & \p & \p & \p & \p \\
\p & \p & \p & \p & \p & \p & \p \\
\p & \p & \p & \p & \p & \p & \p \\
\end{smallmatrix}
\right)_{\hspace{-5pt} D}
$$
$$
\left(
\begin{smallmatrix}
\p & \p & \p & \p & \p & \p & \p \\
\p & \p & 1 & \p & 1 & \p & \p \\
\p & 1 & \p & \p & \p & \p & \p \\
\p & \p & \p & \p & \p & \p & \p \\
\p & 1 & \p & \p & \p & \p & \p \\
\p & \p & \p & \p & \p & \p & \p \\
\p & \p & \p & \p & \p & \p & \p \\
\end{smallmatrix}
\right)_{\hspace{-5pt} C}\left(
\begin{smallmatrix}
\p & \p & \p & \p & \p & \p & \p \\
\p & \p & 1 & \p & \p & 1 & \p \\
\p & 1 & \p & \p & \p & \p & \p \\
\p & \p & \p & \p & \p & \p & \p \\
\p & \p & \p & \p & \p & \p & \p \\
\p & 1 & \p & \p & \p & \p & \p \\
\p & \p & \p & \p & \p & \p & \p \\
\end{smallmatrix}
\right)_{\hspace{-5pt} D}\left(
\begin{smallmatrix}
\p & \p & \p & \p & \p & \p & \p \\
\p & \p & \p & 1 & \p & 1 & \p \\
\p & \p & \p & \p & \p & \p & \p \\
\p & 1 & \p & \p & \p & \p & \p \\
\p & \p & \p & \p & \p & \p & \p \\
\p & 1 & \p & \p & \p & \p & \p \\
\p & \p & \p & \p & \p & \p & \p \\
\end{smallmatrix}
\right)_{\hspace{-5pt} C}\left(
\begin{smallmatrix}
\p & \p & \p & \p & \p & \p & \p \\
\p & \p & \p & \p & \p & \p & 1 \\
\p & \p & \p & \p & 1 & \p & \p \\
\p & \p & \p & \p & \p & \p & \p \\
\p & \p & 1 & \p & \p & \p & \p \\
\p & \p & \p & \p & \p & \p & \p \\
\p & 1 & \p & \p & \p & \p & \p \\
\end{smallmatrix}
\right)_{\hspace{-5pt} C}\left(
\begin{smallmatrix}
\p & \p & \p & \p & \p & \p & \p \\
\p & \p & \p & \p & 1 & \p & \p \\
\p & \p & \p & \p & \p & \p & 1 \\
\p & \p & \p & \p & \p & \p & \p \\
\p & 1 & \p & \p & \p & \p & \p \\
\p & \p & \p & \p & \p & \p & \p \\
\p & \p & 1 & \p & \p & \p & \p \\
\end{smallmatrix}
\right)_{\hspace{-5pt} C}
$$

\item[$\bullet$\hspace{-10pt}]\hspace{4pt} Solutions with 20 zeros
$\ev^{20,1}, \ev^{20,2}, \ev^{20,3}$
$$
\left(
\begin{smallmatrix}
\p & 1 & \p & \p & \p & \p & \p \\
1 & \p & \p & \p & \p & \p & \p \\
\p & \p & \p & \p & \p & \p & \p \\
\p & \p & \p & \p & \p & \p & \p \\
\p & \p & \p & \p & \p & \p & \p \\
\p & \p & \p & \p & \p & \p & \p \\
\p & \p & \p & \p & \p & \p & \p \\
\end{smallmatrix}
\right)_{\hspace{-5pt} C}\left(
\begin{smallmatrix}
\p & \p & \p & \p & \p & \p & \p \\
\p & \p & 1 & \p & \p & \p & \p \\
\p & 1 & \p & \p & \p & \p & \p \\
\p & \p & \p & \p & \p & \p & \p \\
\p & \p & \p & \p & \p & \p & \p \\
\p & \p & \p & \p & \p & \p & \p \\
\p & \p & \p & \p & \p & \p & \p \\
\end{smallmatrix}
\right)_{\hspace{-5pt} C}\left(
\begin{smallmatrix}
\p & \p & \p & \p & \p & \p & \p \\
\p & \p & \p & \p & 1 & \p & \p \\
\p & \p & \p & \p & \p & \p & \p \\
\p & \p & \p & \p & \p & \p & \p \\
\p & 1 & \p & \p & \p & \p & \p \\
\p & \p & \p & \p & \p & \p & \p \\
\p & \p & \p & \p & \p & \p & \p \\
\end{smallmatrix}
\right)_{\hspace{-5pt} C}
$$
\end{itemize}

\section*{Appendix A.2: Classification of graded contractions }
The lists of all contracted Lie algebras of the Gell--Mann
graded $\slc$ is presented.
Indecomposable non--abelian parts of the contracted Lie algebras
are tabulated. Algebras are divided into classes according to
the dimensions of the derived series (DS), the lower central series
(CS) and the upper central series (US). For each of the listed Lie
algebras we give its nonzero commutation relations, dimensions of
algebras of generalized derivations $\dima$, number of formal
invariants $\tau$ and the type of the contraction (C--continuous,
D--discrete). The Levi decomposition is given in the last column for
non--solvable Lie algebras. For the non--solvable and the solvable
non--nilpotent Lie algebras the nilradical is added. Casimir operators of the
nilpotent Lie algebras are presented.

For specification of the ranges of parameters for one--parametric
contractions, the following notations are used
$$
\renewcommand{\arraystretch}{1.5}
\begin{array}{l}
\bC_{10} = \set{z\in \bC}{0<|z|<1}\cup \set{z\in\bC}{|z|=1 \wedge
\operatorname{Im}(z)\geq 0},\\
\bC_{20} = \set{z\in\bC}{0<|z+1|<1 \wedge \operatorname{Re}(z)
\geq -\frac{1}{2}}\cup  \set{z\in\bC}{|z+1|=1 \wedge \operatorname{Re}(z)\geq
-\frac{1}{2} \wedge \operatorname{Im}(z)
> 0}.
\end{array}
$$
We use the superscript $\ast$ for any of the listed sets if there
are no isomorphisms among Lie algebras corresponding to different
parameters in the given set.

For low--dimensional Lie algebras the alternative name (AN) is
assigned according to the list of algebras from \cite{PSWZ_Inv}.
This name is also used for nilradicals and Levi decompositions. In order to display the structure of the contracted Lie algebras, their commutation relations are written in the basis $(e_1,e_2,\dots,e_n)$ which for each case begins with the vectors from the center $(e_1,\dots,e_k)$ and extends to the vectors from the nilradical $(e_1,\dots,e_l)$ and the radical $(e_1,\dots,e_m)$,  $0 \leq k  \leq l\leq m \leq n.$
{\fontsize{10pt}{1}
\begin{sidewaystable}
\vspace{350pt}
\tabcolsep 3pt \vspace{52pt} 
\begin{tabular}{lllllccc}
\multicolumn{8}{l}{\normalsize \bf Non--solvable graded contractions} \\
\hline DS, CS, US & Name & Commutation relations & $\dima$ & $\tau$ & Nilradical & T & Levi dec.\\

\hline$(3)(3)(0)$ & $\mG'_{18,8}$ & $[e_1,e_2]=e_3,\ [e_1,e_3]=e_2,\ [e_2,e_3]=e_1$ & $[3,0,1,0,0,1]$ & 1 & $\{0\}$ & $D$ & $\sla$\\
\hline$(6)(6)(0)$ & $\mG'_{12,2}$& $[e_1,e_4]=e_2,\ [e_1,e_6]=-e_3,\ [e_2,e_4]=-e_1,\ [e_2,e_5]=-e_3,$ & $[7,0,2,0,0,2]$ & 2 & $3\mA_1$ & $D$ & $3\mA_1\triangleleft \sla$\\
 & & $[e_3,e_5]=e_2,\ [e_3,e_6]=e_1,\ [e_4,e_5]=e_6,\ [e_4,e_6]=-e_5,\ [e_5,e_6]=e_4$ \\
\hline$(8)(8)(0)$ & $\mG_{6,2}$& $[e_1,e_7]=2e_1,\ [e_1,e_8]=\sqrt{2}e_2,\ [e_2,e_6]=-\sqrt{2}e_1,\ [e_2, e_7]=e_2,$ & $[9,0,1,0,0,1]$ & 2 & $5\mA_1$ & $C$ & $5\mA_1 \triangleleft \sla$ \\
 & &  $[e_2,e_8]=\sqrt{3}e_3,\ [e_3,e_6]=-\sqrt{3}e_2,\ [e_3,e_8]=\sqrt{3}e_4,\ [e_4,e_6]=-\sqrt{3}e_3,$ \\
 & & $[e_4,e_7]=-e_4,\ [e_4,e_8]=\sqrt{2}e_5,\ [e_5,e_6]=-\sqrt{2}e_4,\ [e_5,e_7]=-2e_5,$ \\
 & &  $[e_6,e_7]=e_6,\ [e_6,e_8]=e_7,\ [e_7,e_8]=e_8 $  \\
\hline$(87)(87)(0)$ & $\mG_{6,1}$& $[e_1,e_5]=e_1,\ [e_1,e_6]=e_2,\ [e_1,e_8]=e_1,\ [e_2,e_5]=e_2,\ [e_2,e_7]=e_1, $ & $[9,0,1,0,0,1]$ & 2 & $4\mA_1$ & $C $ & $\mA_{5,7}^{(1,-1,-1)}\triangleleft \sla$\\
 & & $[e_2,e_8]=-e_2,\ [e_3,e_5]=-e_3,\ [e_3, e_6]=e_4,\ [e_3,e_8]=e_3,$ \\
 & & $[e_4,e_5]=-e_4,\ [e_4,e_7]=e_3,\ [e_4,e_8]=-e_4,\ [e_6,e_7]=e_8,$\\
 & & $[e_6,e_8]=-2e_6,\ [e_7,e_8]=2e_7$ \\
\hline
\end{tabular}

\vspace{50pt}

\begin{tabular}{llllllll}
\multicolumn{7}{l}{\normalsize \bf  Nilpotent graded contractions} \\
\hline DS, CS, US & Name & Commutation relations & $\dima$ & $\tau$ & Casimir operators & T & AN\\
\hline $(310) (310) (13)$ & $\mG'_{20,1}$ & $[e_2,e_3]=e_1$ & $[6,6,3,5,3,4]$ & $1$ & $e_1$ & C & $\mA_{3,1} $\\
\hline $(510) (510) (15)$ & $\mG'_{19,4}$ & $[e_2,e_4]=e_1,\ [e_3,e_5]=e_1$ & $[15,15,5,14,10,11]$  & $1$ & $e_1$ & C & $\mA_{5,4} $\\
\hline $(520) (520) (25)$ & $\mG'_{19,3}$ & $[e_3,e_5]=e_1,\ [e_4,e_5]=e_2$ & $[13,13,7,9,7,11]$ & $3$ & $e_1,\ e_2,\ e_2e_3-e_1e_4$ &C & $\mA_{5,1} $\\
\hline $(620) (620) (26)$ & $\mG'_{18,7}$ & $[e_3,e_5]=e_1,\ [e_4,e_6]=e_1,\ [e_5,e_6]=e_2$ & $[17,18,10,14,10,14]$ & $2$ & $e_1,\ e_2$ & C & $\mA_{6,4} $\\
\hline $(630) (630) (36)$ & $\mG'_{18,16}$ & $[e_4,e_5]=e_1,\ [e_4,e_6]=e_2,\ [e_5,e_6]=e_3$ & $[18,18,10,9,10,19]$ & $4$ & $e_1,\ e_2,\ e_3,\ e_1e_6-e_2e_5+e_3e_4$ & C & $\mA_{6,3} $\\
\hline $(630) (6310) (136)$ & $\mG'_{17,13}$ & $[e_2,e_5]=e_1,\ [e_3,e_6]=e_1,\ [e_4,e_5]=e_3,\ [e_4,e_6]=e_2$ & $[11,10,4,6,4,8]$ & $2$ & $e_1,\ e_2e_3-e_1e_4$ & D & $\mA_{6,14}^{(1)} $\\
\hline $(710) (710) (17)$ & $\mG'_{18,18}$ & $[e_2,e_5]=e_1,\ [e_3,e_6]=e_1,\ [e_4,e_7]=e_1$ & $[28,28,7,27,21,22]$ & $1$ & $e_1$ & C  \\
\hline $(720) (720) (27)$ & $\mG'_{17,5}$ & $[e_3,e_6]=e_1,\ [e_4,e_7]=e_2,\ [e_5,e_6]=e_2,\ [e_5,e_7]=e_1$ & $[19,19,11,15,11,15]$ & $3$ & $e_1,\ e_2,\ e_1^2e_4-e_1e_2e_5+e_2^2e_3$ & C  \\
 & $\mG'_{18,5}$ & $[e_3,e_6]=e_1,\ [e_4,e_7]=e_1,\ [e_5,e_7]=e_2$ & $[21,22,11,18,14,18]$ & $3$ & $e_1,\ e_2,\ e_1e_5-e_2e_4$ & C  \\
\hline
\end{tabular}
\end{sidewaystable}

\begin{sidewaystable}
\vspace{400pt}
\tabcolsep 4pt \vspace{43pt} 
\begin{tabular}{lllllll}
\multicolumn{7}{l}{\normalsize \bf Nilpotent graded contractions (continued)} \\
\hline DS, CS, US & Name & Commutation relations & $\dima$ & $\tau$ & Casimir operators & T \\
\hline $(730) (730) (37)$ & $\mG'_{17,16}$ & $[e_4,e_6]=e_1,\ [e_4,e_7]=e_2,\ [e_5,e_6]=e_2,\ [e_5,e_7]=e_3$  & $[19,24,13,15,13,22]$ & $3$ & $e_1,\ e_2,\ e_3$ & C  \\
 & $\mG'_{16,12}$ & $[e_4,e_6]=e_1,\ [e_5,e_7]=e_2,\ [e_6,e_7]=e_3$ & $[20,24,13,15,13,22]$ & $3$ & $e_1,\ e_2,\ e_3$ & C  \\
 & $\mG'_{17,12}$ & $[e_4,e_7]=e_1,\ [e_5,e_6]=e_1,\ [e_5,e_7]=e_2,\ [e_6,e_7]=e_3$ & $[22,24,13,15,13,22]$ & $3$ & $e_1,\ e_2,\ e_3$ & C  \\
 & $\mG'_{18,6}$ & $[e_4,e_7]=e_1,\ [e_5,e_7]=e_2,\ [e_6,e_7]=e_3$ & $[25,25,13,16,13,22]$ & $5$ & $e_1,\ e_2,\ e_3,\ e_1e_5-e_2e_4,\ e_1e_6-e_3e_4 $ & C  \\
\hline $(730) (7310) (147)$ & $\mG'_{16,8}$ & $[e_2,e_5]=e_1,\ [e_3,e_6]=e_1,\ [e_4,e_7]=e_1,\ [e_5,e_7]=e_3,$ & $[15,16,7,10,7,11]$ & $1$ & $e_1$ & D  \\
 & & $[e_6,e_7]=e_2$\\
\hline $(740) (7410) (147)$ & $\mG'_{15,6}(a)$ & $[e_2,e_5]=(a+1)e_1,\ [e_3,e_6]=e_1,\ [e_4,e_7]=e_1,$ & $[15,13,7,9,6,11]$ & $1$ & $e_1$ & D  \\
 & & $[e_5,e_6]=-ae_4,\ [e_5,e_7]=e_3,\ [e_6,e_7]=e_2 \quad a\in\bC_{20}^\ast$ \\
 & & $a=-\frac{1}{2} \cong a=1$ &  $[17,13,7,9,6,11]$ & & & C\\
\hline $(740) (7410) (247)$ & $\mG'_{16,7}$ & $[e_3,e_6]=e_1,\ [e_4,e_7]=e_1,\ [e_5,e_6]=e_4,\ [e_5,e_7]=e_3,$ & $[15,17,8,9,7,16]$ & $3$ & $e_1,\ e_2,\ e_1e_5-e_3e_4$ & D \\
 & & $[e_6,e_7]=e_2$\\
\hline $(820) (820) (28)$ & $\mG_{18,23}$ & $[e_3,e_6]=e_1,\ [e_4,e_7]=e_2,\ [e_5,e_8]=e_1+e_2$ & $[22,25,13,21,15,19]$ & $2$ & $e_1,\ e_2$ & C  \\
\hline $(830) (830) (38)$ & $\mG_{15,3}$ & $[e_4,e_6]=e_1,\ [e_4,e_8]=e_2,\ [e_5,e_7]=e_3,\ [e_5,e_8]=e_2,$ & $[19,24,16,15,16,25]$ & $4$ & $e_1,\ e_2,\ e_3,\ e_1(e_1e_5-e_2e_7+e_3e_8)$ & C \\
 & & $[e_6,e_7]=e_2,\ [e_6,e_8]=e_3,\ [e_7,e_8]=e_1$ & & & $+e_2^2(e_5-e_4)-e_2e_3e_6+e_3^2e_4$ \\
 & $\mG_{16,2}$ & $[e_4,e_6]=e_1,\ [e_4,e_8]=e_2,\ [e_5,e_7]=e_3,\ [e_5,e_8]=e_2,$ & $[20,24,16,15,16,25]$ & $4$ & $e_1,\ e_2,\ e_3,\ e_1(e_2e_7-e_3e_8) $& C \\
 & & $[e_6,e_7]=e_2,\ [e_6,e_8]=e_3$ & & & $+e_2^2(e_4-e_5)+e_2e_3e_6-e_3^2e_4$ \\
 & $\mG_{17,3}$ & $[e_4,e_6]=e_1,\ [e_5,e_7]=e_2,\ [e_5,e_8]=e_3,\ [e_6,e_8]=e_3,$ & $[21,25,16,16,16,25]$ & $4$ & $e_1,\ e_2,\ e_3,$ & C \\
 & & $[e_7,e_8]=e_1$ & & & $e_1(e_1e_5+e_2e_8-e_3e_7)+e_2e_3e_4$ \\
 & $\mG_{18,1}$ & $[e_4,e_7]=e_1,\ [e_5,e_7]=e_2,\ [e_5,e_8]=e_2,\ [e_6,e_8]=e_3$ & $[22,28,16,19,16,25]$ & $4$ & $e_1,\ e_2,\ e_3,\ e_1(e_2e_6-e_3e_5)+e_2e_3e_4$ & C \\
 & $\mG_{16,6}$ & $[e_4,e_7]=e_1,\ [e_5,e_8]=e_2,\ [e_6,e_7]=e_2,\ [e_6,e_8]=e_1,$ & $[26,28,16,19,16,25]$ & $4$ & $e_1,\ e_2,\ e_3,\ e_1^2e_5-e_1e_2e_6+e_2^2e_4$ & C \\
 & & $[e_7,e_8]=e_3$\\
 & $\mG_{17,4}$ & $[e_4,e_7]=e_1,\ [e_5,e_8]=e_2,\ [e_6,e_8]=e_1,\ [e_7,e_8]=e_3$ & $[27,30,17,21,17,26]$ & $4$ & $e_1,\ e_2,\ e_3,\ e_1e_5-e_2e_6$ & C \\
\hline $(840) (840) (48)$ & $\mG_{15,9}$ & $[e_5,e_7]=e_1,\ [e_5,e_8]=e_2,\ [e_6,e_7]=e_3,\ [e_6,e_8]=e_4$ & $[24,33,17,17,17,33]$ & $4$ & $e_1,\ e_2,\ e_3,\ e_4$ & C \\
 & $\mG_{16,9}$ & $[e_5,e_7]=e_1,\ [e_5,e_8]=e_2,\ [e_6,e_7]=e_2,\ [e_6,e_8]=e_3,$ & $[25,33,17,17,17,33]$ & $4$ & $e_1,\ e_2,\ e_3,\ e_4$ & C \\
 & & $[e_7,e_8]=e_4$\\
 & $\mG_{17,15}$ & $[e_5,e_7]=e_1,\ [e_6,e_7]=e_2,\ [e_6,e_8]=e_3,\ [e_7,e_8]=e_4$ & $[27,33,17,17,17,33]$ & $4$ & $e_1,\ e_2,\ e_3,\ e_4$ & C \\
\hline $(840) (8410) (148)$ & $\mG_{15,4}$ & $[e_2,e_6]=e_1,\ [e_3,e_7]=e_1,\ [e_4,e_8]=e_1,\ [e_5,e_7]=e_4,$ & $[18,16,6,10,7,12]$ & $2$ & $e_1,\ e_1e_5-e_3e_4$ & C \\
 & & $[e_5,e_8]=e_3,\ [e_6,e_8]=e_3,\ [e_7,e_8]=e_2$ \\
\hline $(850) (8520) (258)$ & $\mG_{15,5}$ & $[e_3,e_6]=e_1,\ [e_4,e_7]=e_1+e_2,\ [e_5,e_8]=e_2,$ & $[18,19,7,9,6,17]$ & $2$ & $e_1,\ e_2$ & C \\
    & & $[e_6,e_7]=e_5,\ [e_6,e_8]=e_4,\ [e_7,e_8]=e_3$  \\
\hline
\end{tabular}
\end{sidewaystable} 

\begin{sidewaystable}
\vspace{450pt}
\tabcolsep 3pt \vspace{12pt} 
\begin{tabular}{lllllcc}
\multicolumn{7}{l}{\normalsize \bf Solvable non--nilpotent graded contractions } \\
\hline DS, CS, US & Name & Commutation relations \hspace{200pt} AN & $\dima$ & $\tau$ & Nilradical & T \\
\hline $(320) (32) (0)$ & $\mG'_{19,2}$ & $[e_1,e_3]=e_1,\ [e_2,e_3]=-e_2$ \hspace{200pt} $\mA_{3,4}$ & $[4,3,1,2,0,1]$ & 1 & $2\mA_1$ & $D$ \\
\hline $(530) (532) (12)$ & $\mG'_{18,9}$ & $[e_2,e_5]=e_1,\ [e_3,e_5]=e_3,\ [e_4,e_5]=-e_4$ \hspace{130pt} $\mA_{5,8}^{(-1)}$ & $[8,9,3,5,2,6]$ & 3 & $4\mA_1$ & $D$ \\
\hline $(540) (54) (0)$ & $\mG'_{17,11}(a)$ & $[e_1,e_5]=e_2,\ [e_2,e_5]=e_1,\ [e_3,e_5]=e_4,\ [e_4,e_5]=ae_3,\ a\in \bC_{10}^\ast$ \hspace{30pt} $\mA_{5,17}^{(\sqrt{a},0,0)}$ & $[8,5,1,4,0,1]$ & 3 & $4\mA_1$ & $D$ \\
 &  & $a=1$ & $[12,5,1,4,0,1]$\\
\hline $(640) (64) (0)$ & $\mG'_{15,7}$ & $[e_1,e_6]=e_2,\ [e_2,e_6]=e_1,\ [e_3,e_5]=e_1,\ [e_3,e_6]=e_4,\ [e_4,e_5]=e_2,\ [e_4,e_6]=e_3$ & $[9,6,2,4,0,2]$ & 2 & $\mA_{5,1} $ & $D$  \\
\hline $(6510) (65) (1)$ & $\mG'_{15,8}$ & $[e_2,e_4]=e_1,\ [e_2,e_6]=e_5,\ [e_3,e_5]=e_1,\ [e_3,e_6]=e_4,\ [e_4,e_6]=e_3,\ [e_5,e_6]=e_2$ & $[10,6,2,1,1,7]$ & 2 & $\mA_{5,4}$ & $D$ \\
\hline $(740) (742) (24)$ & $\mG'_{17,6}$ & $[e_3,e_7]=e_1,\ [e_4,e_7]=e_2,\ [e_5,e_7]=e_6,\ [e_6,e_7]=e_5$ & $[16,19,7,10,6,15]$  & 5 & $6\mA_1 $ & $D$ \\
\hline $(750) (754) (1)$ & $\mG'_{12,1}$ & $[e_2,e_6]=e_3,\ [e_2,e_7]=e_4,\ [e_3,e_6]=e_2,\ [e_3,e_7]=e_5,\ [e_4,e_6]=e_5,\ [e_4,e_7]=e_2,$ & $[10,12,3,6,2,8]$ & 3 & $5\mA_1$ & $D$ \\
 & & $[e_5,e_6]=e_4,\ [e_5,e_7]=e_3,\ [e_6,e_7]=e_1$ \\
 & $\mG'_{14,1}$ & $[e_2,e_7]=e_3,\ [e_3,e_7]=e_2,\ [e_4,e_6]=e_2,\ [e_4,e_7]=e_5,\ [e_5,e_6]=e_3,\ [e_5,e_7]=e_4,$ & $[11,12,3,6,2,8]$ & 3 & $\mA_1 \oplus \mA_{5,1}$ & $D$ \\
 & & $[e_6,e_7]=e_1$\\
\hline $(750) (754) (12)$ & $\mG'_{16,4}(a)$ & $[e_2,e_7]=ae_3,\ [e_3,e_7]=e_2,\ [e_4,e_7]=e_5,\ [e_5,e_7]=e_4,\ [e_6,e_7]=e_1,\ a\in\bC_{10}^\ast$ & $[12,13,3,7,2,8]$ & 5 & $6\mA_1$ & $D$  \\
 & & $a=1$ & $[16,13,3,7,2,8]$\\
\hline $(7510) (75) (12)$ & $\mG'_{14,2}$ & $[e_2,e_4]=e_1,\ [e_2,e_7]=e_5,\ [e_3,e_5]=e_1,\ [e_3,e_7]=e_4,\ [e_4,e_7]=e_3,\ [e_5,e_7]=e_2,$ & $[12,12,3,3,2,8]$ & $1$ & $\mA_1 \oplus \mA_{5,4}$ & $D$  \\
 & & $[e_6,e_7]=e_1$\\
\hline $(760) (76) (0)$ & $\mG'_{15,2}(a,b)$ & $[e_1,e_7]=4ae_4,\ [e_2,e_7]=be_5,\ [e_3,e_7]=e_6,\ [e_4,e_7]=e_1,\ [e_5,e_7]=e_2,$ & $[12,7,1,6,0,1]$ & $5$ & $6\mA_1$ & $D$  \\
 & & $[e_6,e_7]=e_3,\ a,b\neq 0$\\
 & & $a=\frac{1}{4}\ \xor\ b=1 \ \xor\ b=4a$ & $[16,7,1,6,0,1]$\\
 & & $a=\frac{1}{4} \wedge  b=1$ & $[24,7,1,6,0,1]$\\
 & & $a=b=1$ & & & & $C$\\
\hline $(840) (842) (25)$ & $\mG_{17,1}$ & $[e_3,e_5]=e_1,\ [e_3,e_8]=-e_1,\ [e_4,e_5]=e_2,\ [e_4,e_8]=e_2,\ [e_6,e_8]=e_7,\ [e_7,e_8]=e_6$ & $[16,21,9,12,8,17]$ & $4$ & $2\mA_1 \oplus \mA_{5,1}$  & $D$ \\
\hline $(850) (854) (12)$ & $\mG_{16,1}(a)$ & $[e_2,e_8]=ae_3,\ [e_3,e_8]=e_2,\ [e_4,e_7]=e_1,\ [e_4,e_8]=e_1,\ [e_5,e_7]=e_6,$ & $[13,14,4,8,3,9]$ & $4$ & $6\mA_1$  & $D $ \\
 & & $[e_6,e_7]=e_5,\ a\in \bC_{10}$\\
\hline $(860) (86) (0)$ & $\mG_{15,1}(a,b)$ & $[e_1,e_8]=ae_2,\ [e_2,e_8]=e_1,\ [e_3,e_7]=be_4,\ [e_4,e_7]=e_3,\ [e_5,e_7]=e_6,\ [e_5,e_8]=e_6,$ & $[12,7,1,6,0,1]$ & $4$ & $6\mA_1$  & $D $  \\
 & & $[e_6,e_7]=e_5,\ [e_6,e_8]=e_5,\ a,b \neq 0$ \\
 & & $a=b=1$ & & & & $C$\\
\hline $(8620) (86) (0)$ & $\mG_{11,2}$ & $[e_1,e_7]=e_1,\ [e_1,e_8]=e_1,\ [e_2,e_3]=e_1,\ [e_2,e_7]=e_2,\ [e_3,e_8]=e_3,\ [e_4,e_7]=-e_4,$ & $[10,2,1,2,0,1]$ & $2$ & $\mA_{3,1} \oplus \mA_{3,1}$  & $C$ \\
& & $[e_4,e_8]=-e_4,\ [e_5,e_6]=e_4,\ [e_5,e_7]=-e_5,\ [e_6,e_8]=-e_6 $ \\
& $\mG_{11,1}$ & $[e_1,e_8]=e_2,\ [e_2,e_8]=e_1,\ [e_3,e_6]=e_1,\ [e_3,e_8]=e_5,\ [e_4,e_7]=e_2,\ [e_4,e_8]=e_5,$ & $[11,7,2,2,0,2]$ & $2$ & $\mG'_{17,5}$ & $C$ \\
& & $[e_5,e_6]=e_2,\ [e_5,e_7]=e_1,\ [e_5,e_8]=2e_3+2e_4,\ [e_6,e_8]=-e_7,\ [e_7,e_8]=-e_6 $ \\
\hline $(8730) (87) (1)$ & $\mG_{11,3}$ & $[e_2,e_8]=2e_3,\ [e_3,e_8]=2e_2,\ [e_4,e_6]=e_1+e_3,\ [e_4,e_7]=e_2,\ [e_4,e_8]=e_5,$ & $[12,10,2,3,1,9]$ & $2$ & $\mG'_{17,16} $ & $C$\\
& & $[e_5,e_6]=e_2,\ [e_5,e_7]=e_3-e_1,\ [e_5,e_8]=e_4,\ [e_6,e_8]=e_7,\ [e_7,e_8]=e_6$  \\
\hline
\end{tabular}
\end{sidewaystable}
}

\section*{Appendix A.3: Invariant Functions of one--parametric graded contractions
}
The blank space in the table stands for
general complex number different from all previously listed in
given table.
{\leftmargini 0pt
\begin{itemize}
\item[$\bullet$\hspace{-10pt}]\hspace{4pt} $\mG'_{17,11}(a),\ a
\neq 0$, \qquad $\mG'_{17,11}(a) \cong
\mG'_{17,11}\left(\frac{1}{a}\right)$ \qquad $\longrightarrow$ \
$a\in\bC^\ast_{10}$
\vspace{-5pt}
\begin{center}
\begin{tabular}[t]{|l||c|c|c|}
\multicolumn{4}{l}{$a=1$} \\
\hline \parbox[l][18pt][c]{0pt}{} $\alpha$ & -1 & 1 &   \\
\hline \parbox[l][18pt][c]{0pt}{} $\psi(\alpha)$ & 13 & 12 & 5 \\
\hline
\end{tabular}\quad
\begin{tabular}[t]{|l||c|c|c|c|c|}
\multicolumn{6}{l}{$a=-1$} \\
\hline \parbox[l][18pt][c]{0pt}{} $\alpha$ & $-i$ & $i$ & -1 & 1 &  \\
\hline \parbox[l][18pt][c]{0pt}{} $\psi(\alpha)$ & 9 & 9 & 9 & 8 & 5\\
\hline
\end{tabular}\quad
\begin{tabular}[t]{|l||c|c|c|c|c|c|c|}
\multicolumn{8}{l}{$a \neq 0,\pm 1$} \\
\hline \parbox[l][18pt][c]{0pt}{} $\alpha$ & -1 & 1 & $\frac{1}{\sqrt{a}}$ & $-\frac{1}{\sqrt{a}}$ & $\sqrt{a}$ & $-\sqrt{a}$ & \\
\hline \parbox[l][18pt][c]{0pt}{} $\psi(\alpha)$ & 9 & 8 & 7 & 7 & 7 & 7 & 5  \\
\hline
\end{tabular}
\end{center}
\vspace{0pt}
\item[$\bullet$\hspace{-10pt}]\hspace{4pt} $\mG'_{16,4}(a),\ a
\neq 0$, \qquad $\mG'_{16,4}(a) \cong
\mG'_{16,4}\left(\frac{1}{a}\right)$ \qquad $\longrightarrow$ \
$a\in\bC^\ast_{10}$

\begin{center}
\tabcolsep 3pt
\begin{tabular}[t]{|l||c|c|c|c|}
\multicolumn{5}{l}{$a=1$} \\
\hline \parbox[l][18pt][c]{0pt}{} $\alpha$ & $-1$ & 1 & 0 & \\
\hline \parbox[l][18pt][c]{0pt}{} $\psi(\alpha)$ & 17 & 16 & 13 & 9\\
\hline
\end{tabular}\quad 
\begin{tabular}[t]{|l||c|c|c|c|c|c|}
\multicolumn{7}{l}{$a=-1$} \\
\hline \parbox[l][18pt][c]{0pt}{} $\alpha$ & 0 & $-i$ & $i$ & $-1$ & 1 &  \\
\hline \parbox[l][18pt][c]{0pt}{} $\psi(\alpha)$ & 13 & 13 &  13 & 13 & 12 & 9\\
\hline
\end{tabular}\quad
\begin{tabular}[t]{|l||c|c|c|c|c|c|c|c|}
\multicolumn{9}{l}{$a \neq 0,\pm 1$} \\
\hline \parbox[l][18pt][c]{0pt}{} $\alpha$ & 0 & $-1$ & 1 & $\frac{1}{\sqrt{a}}$ & $-\frac{1}{\sqrt{a}}$ & $\sqrt{a}$ & $-\sqrt{a}$ & \\
\hline \parbox[l][18pt][c]{0pt}{} $\psi(\alpha)$ & 13 & 13 & 12 & 11 & 11 & 11 & 11 & 9 \\
\hline
\end{tabular}\
\end{center}
\vspace{0pt}

\item[$\bullet$\hspace{-10pt}]\hspace{4pt} $\mG_{16,1}(a),\ a \neq
0$, \qquad $\mG'_{16,1}(a)\cong \mG'_{16,1}\left(\frac{1}{a}\right)$
\qquad $\longrightarrow$ \ $a\in\bC_{10}$

\item[$\bullet$\hspace{-10pt}]\hspace{4pt} $\mG'_{15,6}(a),\ a
\neq 0,-1$, \qquad $\longrightarrow$ \ $a\in\bC^\ast_{20}$
$$\mG'_{15,6}(a) \cong \mG'_{15,6}\left(\frac{1}{a}\right) \cong
\mG'_{15,6}(-a-1)\cong \mG'_{15,6}\left(\frac{-1}{a+1}\right) \cong
\mG'_{15,6}\left(\frac{-a}{a+1}\right) \cong
\mG'_{15,6}\left(\frac{a+1}{-a}\right)$$

\begin{center}
\tabcolsep 2pt
\begin{tabular}[t]{|l||c|c|c|c|}
\multicolumn{5}{l}{$a = -\frac{1}{2},-2,1$} \\
\hline \parbox[l][18pt][c]{0pt}{} $\alpha$ & 1 & $-\frac{1}{2}$ & $-2$ &  \\
\hline \parbox[l][18pt][c]{0pt}{} $\psi(\alpha)$ & 17 & 15 & 15 & 13 \\
\hline
\end{tabular}\
\begin{tabular}[t]{|l||c|c|c|c|}
\multicolumn{5}{l}{$a = -\frac{1 \pm \sqrt{3}i}{2}$} \\
\hline \parbox[l][18pt][c]{0pt}{} $\alpha$ & $-\frac{1 + \sqrt{3}i}{2}$ & $-\frac{1 - \sqrt{3}i}{2}$ & 1 &  \\
\hline \parbox[l][18pt][c]{0pt}{} $\psi(\alpha)$ & 16 & 16 & 15 & 13\\
\hline
\end{tabular}\
\begin{tabular}[t]{|l||c|c|c|c|c|c|c|c|}
\multicolumn{9}{l}{$a \neq 0,\pm 1,-\frac{1}{2},-2, -\frac{1 \pm \sqrt{3}i}{2}$} \\
\hline \parbox[l][18pt][c]{0pt}{} $\alpha$ & 1 & $a$ & $\frac{1}{a}$ & $-1-a$ & $-\frac{1}{1+a}$ & $-\frac{a}{1+a}$ & $-\frac{1+a}{a}$ & \\
\hline \parbox[l][18pt][c]{0pt}{} $\psi(\alpha)$ & 15 & 14 & 14 & 14 & 14 & 14 & 14 & 13\\
\hline
\end{tabular}
\end{center}
\end{itemize}
}


\end{document}